\crefname{hypothesis}{Hypothesis}{Hypotheses}
\title{Lack of null-controllability for the fractional heat equation and related 
equations\thanks{This work was partially supported by the ERC 
advanced grant SCAPDE, seventh framework program, agreement no.~320845. This work is 
partially supported by a public grant overseen by the French National Research 
Agency (ANR) as part of the ``Investissements d'Avenir'''s program of the Idex PSL 
reference «ANR-10-IDEX-0001-02 PSL».}}
\author{Armand Koenig\thanks{Université Paris-Dauphine, Université PSL, CNRS, 
CEREMADE, 75016 PARIS, FRANCE.
  (\email{armand.koenig@dauphine.psl.eu}, \url{https://koenig.perso.math.cnrs.fr}).}}
\DeclareMathOperator{\supp}{support}
\DeclareMathOperator{\sign}{sgn}
\DeclareMathOperator{\atan}{arctan}
\DeclareMathOperator{\Ai}{Ai}
\newcommand{\diff}[1][-3]{\ensuremath{\mathop{}\mkern#1mu\mathrm{d}}}
\newcommand{\set}{\mathbb}
\newcommand{\per}[1]{#1\mkern0mu_{\mathrm{per}}}
\newcommand{\ssqrt}[1]{\sqrt{\smash[b]{#1}}}
\newcommand{\cross}[1]{+(-#1,0) -- +(#1,0) +(0,-#1) -- +(0,#1)}
\newcommand{\s}{\mkern1.5mu}
\newcommand{\Cc}{{\mathcal C}}
\newcommand{\puncfootnote}[1]{\kern-0.2em\footnote{#1}}
\newcommand{\smallo}{o}
\newcommand{\bigO}{{\mathcal O}}
\newcommand{\cri}{{\mathrm c}}
\newcommand{\csubset}{\Subset}
\newcommand{\step}[1]{\vskip-\lastskip\medskip\noindent\textit{#1}}
\newcommand{\Ighv}{I_{\gamma,h,v}(x)}
\begin{document}

\maketitle

\begin{abstract}
  We consider the equation $(\partial_t + 
\rho(\sqrt{-\Delta}))f(t,x) = \mathds 1_\omega u(t,x)$, $x\in \set R$ or $\set T$. We 
prove it is not null-controllable if $\rho$ is analytic on a conic neighborhood 
of $\set R_+$ and $\rho(\xi) = \smallo(|\xi|)$. The proof relies essentially on 
geometric optics, i.e.\ estimates for the evolution of semiclassical coherent states.

The method also applies to other equations. The most interesting example might be the 
Kolmogorov-type equation $(\partial_t -\partial_v^2 + 
v^2\partial_x)f(t,x,v) = \mathds 1_\omega u(t,x,v)$ for $(x,v)\in \Omega_x\times 
\Omega_v$ with $\Omega_x = \set R$ or $\set T$ and $\Omega_v = \set R$ or $(-1,1)$. 
We prove it is not null-controllable in any time 
if $\omega$ is a vertical band $\omega_x\times \Omega_v$. The idea is to remark 
that, for some families of solutions, the Kolmogorov equation behaves like 
the rotated fractional heat equation $(\partial_t + 
\sqrt i(-\Delta)^{1/4})g(t,x) = \mathds 1_\omega u(t,x)$, $x\in \set T$.
\end{abstract}

\begin{keywords}
  null controllability, observability, fractional heat 
equation, degenerate parabolic  equations
\end{keywords}

\begin{AMS}
93B05, 93B07, 93C20, 35K65
\end{AMS}

\section{Introduction}
\subsection{Problem of the null-controllability}
Consider the following equation, which is called the \emph{fractional heat 
equation}, where $\Omega = \set R$ or $\set T$, $\omega$ is an open 
subset of $\Omega$, $\alpha\geq 0$:
\begin{equation*}
  (\partial_t + (-\Delta)^{\alpha/2})f(t,x) = \mathds 1_\omega u(t,x) 
\quad t\in [0,T], x \in \Omega
\end{equation*}
Here, we define 
$(-\Delta)^{\alpha/2}$ with the functional calculus, that is,
$(-\Delta)^{\alpha/2}f = \mathcal F^{-1}(|\xi|^{\alpha}\mathcal F(f))$ if 
$\Omega = \set R$, where $\mathcal F$ is the Fourier transform; and 
$c_n((-\Delta)^{-\alpha/2} f) = |n|^\alpha c_n(f)$ if $\Omega = \set T$, where 
$c_n(f)$ is the $n$th Fourier coefficient of $f$. 

It is a control problem with state $f\in L^2(\Omega)$ and control $u$ supported in 
$\omega$. More precisely, we are interested in the exact null-controllability of 
this equation.
\begin{definition}\label{def:control_rfhe}
We say that the fractional heat equation is null-controllable on $\omega$ in 
time $T>0$ if for all $f_0$ in $L^2(\Omega)$, there exists $u$ in $L^2([0,T]\times 
\omega)$ 
such that the solution $f$ of:
\begin{equation}\label{eq:rfhe_control}
 \begin{aligned}
  (\partial_t + (-\Delta)^{\alpha/2})f(t,x) &= \mathds 1_\omega u(t,x) 
&&\quad t\in [0,T], x \in \Omega\\
  f(0,x) &= f_0(x) &&\quad x \in \Omega.
 \end{aligned}
\end{equation}
satisfies $f(T,x,v) = 0$ for all $(x,v)$ in $\Omega$.
\end{definition}

The main motivation for this study, apart from studying the fractional heat 
equation itself, is the null-controllability of a Kolmogorov-type equation. More 
specifically, we are interested in the following equation,
where $\Omega = \Omega_x \times \Omega_v$ with $\Omega_x = \set R$ or $\set 
T$, $\Omega_v = \set R$ or $(-1,1)$ and $\omega$ is an open subset of $\Omega$:
\begin{equation*}
  (\partial_t + v^2 \partial_x - \partial_v^2)f(t,x,v) = \mathds 1_\omega 
u(t,x,v) \quad t\in [0,T], (x,v) \in \Omega.
\end{equation*}
For convenience, we will just say in this paper ``the Kolmogorov equation''.
Note that thanks to Hörmander's bracket condition~\cite[Section 
22.2]{hormander_2007}, 
the operator $v^2\partial_x - \partial_v^2$ is hypoelliptic. Also, this equation is 
well-posed. This can be proved by Hille-Yosida's theorem (see \cite[Section 
4]{beauchard_2014a} in the case $\Omega = \set T \times  (-1,1)$).
As we will see, this Kolmogorov equation is related to the rotated fractional 
heat equation.

\begin{definition}\label{def:control_kolm}
We say that the Kolmogorov equation is null-controllable on $\omega$ in time $T>0$ 
if for all $f_0$ in $L^2(\Omega)$, there exists $u$ in $L^2([0,T]\times \omega)$ 
such that the solution $f$ of:
\begin{equation}\label{eq:kolm_control}
 \begin{aligned}
  (\partial_t + v^2 \partial_x - \partial_v^2)f(t,x,v) &= \mathds 1_\omega 
u(t,x,v) &&\quad t\in [0,T], (x,v) \in \Omega\\
  f(0,x,v) &= f_0(x,v) &&\quad (x,v) \in \Omega\\
  f(t,x,v) &= 0&&\quad t\in[0,T],(x,v)\in \partial \Omega \text{ (if non-empty)}
 \end{aligned}
\end{equation}
satisfies $f(T,x,v) = 0$ for all $(x,v)$ in $\Omega$.
\end{definition}

\subsection{Statement of the results}
We will prove that the rotated fractional heat equation is never null controllable 
if $\Omega\setminus\omega$ has nonempty interior, and that the Kolmogorov equation 
is never null-controllable if $\omega = \omega_x\times \Omega_v$ where 
$\Omega_x\setminus \omega_x$ has nonempty interior.

\begin{theorem}\label{th:th_heat}
 Let $0\leq\alpha<1$ and $\Omega = \set R$ or $\Omega = 
\set T$. Let $\omega$ be a strict open subset of $\Omega$. The fractional 
heat equation \eqref{eq:rfhe_control} is not null controllable in any time on 
$\omega$.
\end{theorem}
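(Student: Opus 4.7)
\emph{The plan} is to disprove the observability inequality dual to null-controllability using a family of semiclassical coherent states, as the abstract suggests. By the standard HUM duality, null-controllability of~\eqref{eq:rfhe_control} on $\omega$ in time $T$ is equivalent to the existence of a constant $C>0$ such that
$$
\|e^{-TL^*}g_0\|_{L^2(\Omega)}^2 \leq C\int_0^T\!\int_\omega |e^{-sL^*}g_0(x)|^2\diff s\diff x \qquad\forall g_0\in L^2(\Omega),
$$
with $L=z(-\Delta)^{\alpha/2}$. Since $L^*=\overline z(-\Delta)^{\alpha/2}$ has exactly the same form as $L$ (the coefficient $\overline z$ still has positive real part), it is equivalent to falsify the analogous inequality with $L$ in place of $L^*$. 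I would do so by exhibiting a sequence of test data $f_h\in L^2(\Omega)$, $h\to 0^+$, on which it breaks down. The case $\alpha=0$ is immediate since $L$ is then a scalar multiplier that preserves supports, so henceforth assume $0<\alpha<1$.

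Fix $x_0$ in the interior of $\Omega\setminus\omega$ with $d:=\distance(x_0,\omega)>0$, and take $f_h$ to be a coherent state concentrated at position $x_0$ and frequency $\xi_h=1/h$: on $\set R$, set $f_h(x)=h^{-1/4}e^{-(x-x_0)^2/(2h)}e^{i(x-x_0)/h}$; on $\set T$, use the analogous trigonometric polynomial with Fourier coefficients forming a discrete Gaussian centred at $n_h=\lfloor 1/h\rfloor$ of width $\asymp h^{-1/2}$. Then $\|f_h\|_{L^2}\asymp 1$, while $|f_h|^2$ and $|\widehat{f_h}|^2$ are Gaussian-concentrated around $x_0$ and $\xi_h$ at widths $\sqrt h$ and $h^{-1/2}$ respectively. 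Two Fourier-side estimates drive the argument. On the Fourier support of $\widehat{f_h}$, the multiplier $e^{-sz|\xi|^\alpha}$ has modulus uniformly comparable to $e^{-s\Re(z)h^{-\alpha}}$, so Plancherel yields the lower bound
$$
\|e^{-TL}f_h\|_{L^2(\Omega)}^2 \geq c\s e^{-2T\Re(z)\s h^{-\alpha}}.
$$
Conversely, expanding $|\xi|^\alpha$ to second order around $\xi_h$ and inverting the Fourier transform shows that $e^{-sL}f_h$ is, to leading order, again a Gaussian wave packet centred near $x_0+s\alpha\Im(z)h^{1-\alpha}$, of width $\asymp\sqrt h$ and peak amplitude $\asymp h^{-1/4}e^{-s\Re(z)h^{-\alpha}}$; both the centre shift and the envelope spreading are negligible as $h\to 0^+$ because $\alpha<1$. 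Thus $\omega$ stays at distance $\geq d/2$ from the packet's centre throughout $s\in[0,T]$ for $h$ small, and integrating the Gaussian tail gives the upper bound
$$
\int_0^T\!\int_\omega |e^{-sL}f_h(x)|^2\diff x\diff s \leq C\s e^{-\kappa/h}
$$
for some $\kappa>0$ of order $d^2$.

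Combining the two estimates,
$$
\|e^{-TL}f_h\|_{L^2(\Omega)}^2 \geq c\s e^{\kappa/h - 2T\Re(z)h^{-\alpha}} \int_0^T\!\int_\omega |e^{-sL}f_h(x)|^2\diff x\diff s,
$$
and since $\alpha<1$ forces $h^{-\alpha}\ll h^{-1}$ as $h\to 0^+$, the prefactor blows up, contradicting observability and hence null-controllability for every $T$ and $C$. The main technical obstacle will be the second (upper-bound) estimate: the wave-packet description of $e^{-sL}f_h$ must be justified uniformly in $s\in[0,T]$ with explicit control of the group velocity $\alpha\s\Im(z)\s h^{1-\alpha}$ and of the second-order envelope spreading induced by the symbol's curvature $|\alpha(\alpha-1)|\xi_h^{\alpha-2}$. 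In the periodic case one must additionally sum the Gaussian over its integer translates, but each such translate contributes an exponentially small term of the same order and does not affect the final scaling.
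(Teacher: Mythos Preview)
Your overall strategy---coherent states disproving observability by pitting $e^{-c/h}$ spatial concentration against $e^{-Ch^{-\alpha}}$ dissipation---is exactly the paper's, and your Plancherel lower bound is in fact simpler than the paper's pointwise saddle-point estimate. The periodization for $\set T$ is also what the paper does.

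The gap is the upper bound on $\omega$. Replacing $|\xi|^\alpha$ by its quadratic jet at $\xi_h$ does make the inverse Fourier transform an exact Gaussian with the center, width and amplitude you compute, but the \emph{remainder} coming from the cubic and higher terms, bounded in the only way a real-variable argument allows (take absolute values on the Fourier side), is at best $O(h^{\beta})$ for some $\beta>0$; this swamps the $e^{-\kappa/h}$ Gaussian tail you need on $\omega$. To obtain genuine $e^{-\kappa/h}$ decay at distance $\geq d$ from $x_0$ you must deform the $\xi$-contour toward the complex saddle $\xi\approx\xi_h+i(x-x_0)/h$, converting the oscillation $e^{i(x-x_0)\xi}$ into actual Gaussian decay; this is precisely what the paper does (Proposition~\ref{th:est_coherent}, resting on the saddle-point Proposition~\ref{th:saddle_point_local}). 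That deformation in turn requires analyticity of $\xi\mapsto\xi^\alpha$ along the shifted contour, so one must stay away from the branch point at $\xi=0$: the paper inserts a smooth, compactly supported frequency cutoff $\chi(\xi-\xi_0)$ (in semiclassical variables) to enforce this, at an acceptable cost of $O(e^{-c/h})$. So your plan is right, but ``justify the wave-packet description'' concretely means ``run the saddle-point method after a frequency cutoff''; controlling the group velocity and envelope spreading is not where the difficulty lies.
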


This Theorem still holds in higher dimension, with $\Omega = \set R^d\times 
\set T^{d'}$, but our method seems ineffective to treat the case where $\Omega$ is, 
say, an open subset of $\set R$. 
This may be because we are using the spectral definition of the fractional 
Laplacian, and our method might be adapted if instead we used a singular kernel 
definition 
of the fractional Laplacian. 

Actually, we prove the non-null controllability of a class of equations of the form 
$(\partial_t + \rho(\ssqrt{-\Delta}))f(t,x) = \mathds 1_\omega(t,x)$.
\begin{theorem}\label{th:gen_frac}
 Let $K>0$, $\Cc = \{\xi \in \set C, \Re(\xi)>K, |\Im(\xi)| 
<K^{-1}\Re(\xi)\}$ and $\rho\colon \Cc\cup \set R_+ \to \set C$ such that
\begin{enumerate}
 \item $\rho$ is holomorphic on $\Cc$,
 \item $\rho(\xi) = \smallo(|\xi|)$ in the limit $|\xi|\to +\infty$, $\xi\in \Cc$,
 \item $\rho$ is measurable on $\set R_+$ and $\inf_{\xi\in \set R_+} \Re(\rho(\xi)) 
> -\infty$.
\end{enumerate}

Let $\Omega = \set R$ or $\set T$, $\omega$ be a strict open subset of $\Omega$ 
and $T>0$. Then the equation
\begin{equation}
 \label{eq:gen_frac_control}
 (\partial_t + \rho(\ssqrt{-\Delta}))f(t,x) = \mathds 1_\omega u(t,x),\quad 
t\in[0,T],\ x\in \Omega
\end{equation}
is not null-controllable on $\omega$ in time $T$.
\end{theorem}

For lack of a better name, we will call the equation~\eqref{eq:gen_frac_control} the 
\emph{generalized fractional heat equation}. This Theorem can be generalized to the 
case $\Omega = \set R^d\times \set T^{d'}$. The hypothesis $\inf_{\set R_+} \Re(\rho) 
> -\infty$ is only used to ensure that the equation is well-posed.

The fractional heat equation is the case $\rho(\xi) = \xi^\alpha$. Note that if 
$\alpha=0$, then the fractional heat equation is just a 
family of decoupled ordinary differential equations, and the conclusion of 
\cref{th:th_heat} is unimpressive. At the other end, the method used in this article 
does not work as-is if $\alpha=1$, but we still expect non-null-controllability, even 
if this remains a conjecture if $\Omega$ is not the one-dimensional torus.

Some equations behave like the fractional heat equation, at least in some regimes. 
This is the case of the Kolmogorov equation, and if the control acts on a vertical 
band, we will prove it is not null-controllable with the same method.
\begin{theorem}\label{th:th_kolm}
Let $\Omega_x = \set R$ or $\set T$, let $\Omega_v = \set R$ or $(-1,1)$, and let 
$\Omega = \Omega_x \times \Omega_v$. Let $T>0$ and $\omega_x$ be a strict open subset 
of $\Omega_x$. The Kolmogorov equation~\eqref{eq:kolm_control} is not 
null-controllable on $\omega = \omega_x \times \Omega_v$ in time $T$.
\end{theorem}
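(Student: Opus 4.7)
The plan is to reduce Theorem~\ref{th:th_kolm} to Theorem~\ref{th:th_heat} by combining the Hilbert Uniqueness Method with separation of variables in the $v$ direction. By HUM duality, null-controllability of the Kolmogorov equation on $\omega_x\times\Omega_v$ in time $T$ is equivalent to the observability inequality
\[
\|\phi(T,\cdot,\cdot)\|_{L^2(\Omega)}^2 \leq C\int_0^T\!\int_{\omega_x}\!\int_{\Omega_v} |\phi(t,x,v)|^2 \, \diff v\, \diff x\, \diff t
\]
for every solution $\phi$ of the adjoint equation $(\partial_t-v^2\partial_x-\partial_v^2)\phi=0$ (with Dirichlet boundary condition at $v=\pm 1$ when $\Omega_v=(-1,1)$); it thus suffices to refute this estimate for every constant $C$.

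I would build candidate $\phi$ by separation of variables. For each integer $n\ge 1$, the non-selfadjoint complex harmonic oscillator $-\partial_v^2-inv^2$ on $\Omega_v$ admits a ground state $\psi_n$ with eigenvalue $\mu_n$. On $\set R$, $\psi_n$ is the normalised Gaussian $c_n\exp(-\sqrt{-in}\,v^2/2)$ with $\mu_n=\sqrt{-in}=\sqrt n\,e^{-i\pi/4}$, concentrated at $v=0$ on scale $n^{-1/4}$. On $(-1,1)$ with Dirichlet boundary, this Gaussian is exponentially small at the boundary for large $n$, so $\psi_n$ and $\mu_n$ are exponentially close to their real-line values. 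Consequently $\phi_n(t,x,v)=e^{-\mu_n t}e^{inx}\psi_n(v)$ is an exact solution of the adjoint equation, and so is any finite linear combination $\phi=\sum_n a_n\phi_n$.

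To each such $\phi$ I associate $g(t,x)=\sum_n a_n e^{-\mu_n t}e^{inx}$ on $\set T$. Because $\mu_n$ equals $\sqrt{-i}\,n^{1/2}$ up to an exponentially small correction, $g$ satisfies (up to a negligible error) the rotated fractional heat equation of Definition~\ref{th:control_heat} with $\alpha=1/2$ and $z=\sqrt{-i}$. With $\|\psi_n\|_{L^2(\Omega_v)}=1$, the orthogonality of the $e^{inx}$ gives $\|\phi(T)\|_{L^2(\Omega)}^2=\|g(T)\|_{L^2(\set T)}^2$. Were the $\psi_n$ moreover orthogonal across $n$, the right-hand side of the observability inequality for $\phi$ would equal $\int_0^T\!\int_{\omega_x}|g|^2\, \diff x\, \diff t$, so any observability estimate for $\phi$ would force one for $g$, directly contradicting Theorem~\ref{th:th_heat}.

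The main obstacle is that the $\psi_n$ all concentrate at $v=0$ and are not mutually orthogonal; the overlap $\langle\psi_n,\psi_m\rangle_{L^2(\Omega_v)}$ does not decay in $|n-m|$. I would circumvent this by choosing $(a_n)$ supported in a narrow window $[N,N+\sigma]$ around a large $N$, with $1\ll\sigma\ll N$, mirroring the coherent-state counterexamples used in the proof of Theorem~\ref{th:th_heat}. On such a window the Gaussian widths $n^{-1/4}$ and eigenvalues $\mu_n$ vary by factors $1+o(1)$, so $\psi_n$ is $L^2(\Omega_v)$-close to $\psi_N$ uniformly in $n$; this yields the approximate factorisation $\phi(t,x,v)\approx\psi_N(v)g(t,x)$ with error $o(1)$ as $N\to\infty$, which restores the reduction to the rotated fractional heat equation and gives the required contradiction.
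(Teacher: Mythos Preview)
Your reduction to the rotated fractional heat equation is exactly the heuristic the paper uses, and the choice of test functions $\phi=\sum a_n e^{-\mu_n t}e^{inx}\psi_n(v)$ with $(a_n)$ a coherent-state packet centred at a large $N$ is also the right one. The gap is in the last step. An $L^2$ error of size $o(1)$ in the factorisation $\phi\approx\psi_N(v)g(t,x)$ is far too weak. On the window $|n-N|\lesssim\sqrt N$ one has $\|\psi_n-\psi_N\|_{L^2(\Omega_v)}^2\sim|n-N|/N$, and using Parseval in $x$ (the only way you can exploit the factorisation) gives
\[
\|\phi-\psi_N g\|_{L^2([0,T]\times\omega)}^2\le\|\phi-\psi_N g\|_{L^2([0,T]\times\set T\times\Omega_v)}^2=\int_0^T\sum_n|a_n|^2e^{-2\Re\mu_n t}\|\psi_n-\psi_N\|^2\,\diff t\sim N^{-1}.
\]
But $\|\phi(T)\|^2=\sum_n|a_n|^2e^{-2\Re\mu_nT}\sim e^{-c\sqrt N}$, so the error term is \emph{exponentially larger} than the quantity you are trying to bound below. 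The cancellations that make $g$ tiny on $\omega_x$ (of order $e^{-cN}$) are destroyed once you strip off the $n$-dependent factor $\psi_n(v)$ and bound the remainder crudely over the whole torus; no choice of window width $\sigma$ fixes this, because $\|\phi(T)\|$ is always $e^{-c\sqrt N}$.

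The paper avoids this by never separating $v$ from $x$. For $\Omega_v=\set R$ one has $\psi_n(v)e^{-\mu_nt}=e^{-\sqrt{-in}(t+v^2/2)}$, so the full packet $\phi(t,x,v)$ is \emph{literally} the fractional-heat coherent state of Proposition~\ref{th:est_coherent} evaluated at the shifted time $t+v^2/2$. The pointwise upper bound $|x|^{-2}e^{-c/h-c(t+v^2/2)/\sqrt h}$ on $\{|x|>\epsilon\}$ is then available uniformly in $v$ and integrates in $v$ to something still of order $e^{-c/h}$, which is what beats the lower bound $e^{-C/\sqrt h}$ for $\|\phi(T)\|$. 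For $\Omega_v=(-1,1)$ the eigenpair differs from the Gaussian one by a correction that is uniformly close to $1$ on a complex neighbourhood of the integration contour (Theorem~\ref{th:asymptotic}, Propositions~\ref{th:est_agmon} and~\ref{th:lower_agmon}), so the same contour-deformation/saddle-point argument goes through. In short, you need the pointwise (analytic) smallness of $\phi$ on $\omega$, not an $L^2$ perturbation of a factorised form.
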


This Theorem can be extended to higher dimension in $x$ and $v$ if 
$\Omega_v = \set R^d$. If we want, say $\Omega_v = (-1,1)^d$, we lack information on 
the eigenvalues and eigenfunctions of $-\partial_v^2 + in v^2$ on $(-1,1)^d$, but 
this is the only obstacle to the generalization of the Theorem to this case. 
We also 
give a non-null-controllability result in small time for more general control region.

\begin{theorem}\label{th:th_kolm_general}
 Let $\Omega_x = \set R$ or $\Omega_x = \set T$ and $\Omega_v = \set R$ or $\Omega_v 
= (-1,1)$. Let $\Omega = \Omega_x \times \Omega_v$. Let $\omega 
\subset \Omega$. Assume that there exist $x_0\in \Omega_x$ and 
$a>0$ such that the symmetric vertical interval $\{(x_0,v), -a<v<a\}$ is disjoint 
from $\overline \omega$. Then, the Kolmogorov equation~\eqref{eq:kolm_control} is not 
null-controllable on $\omega$ in time $T<a^2\!/2$. 
\end{theorem}

Whether this condition $T<a^2\!/2$ is optimal or not is an open question, but we 
conjecture that it is optimal, at least for some geometries. If $\omega = \set 
T\times (a,b)$ with $0<a<b$, \cref{th:th_kolm_general} proves that 
null-controllability does not hold in time $T< a^2\!/2$. This special case was 
already known~\cite[Theorem 1.3]{beauchard_2015a}, it is also proved in the same reference 
that null-controllability holds for some $T>0$. Our \cref{th:th_kolm_general} 
sharpens the lower-bound on the minimal time of null-controllability if the geometry 
of $\omega$ is different than a cartesian product.

While the fractional heat equation and the Kolmogorov equation are the main focus of 
this article, the method can be used to treat other equations: those that 
behave like the fractional heat equation for $\alpha<1$. In \cref{sec:other_eq}, we 
briefly discuss the fractional Schrödinger equation, and
sketch the proof for the Kolmogorov-type equation $(\partial_t -\partial_v^2 
-v\partial_x)f(t,x,v) = \mathds 1_\omega u(t,x,v)$ (notice the $v$ instead of the 
$v^2$), and the \emph{improved Boussinesq equation} $(\partial_t^2 -\partial_x^2 
-\partial_x^2\partial_t^2)f(t,x) = \mathds 1_\omega u(t,x)$.

\subsection{Bibliographical comments}
\subsubsection{Control of partial differential equations}\label{sec:contr_abst}
Let $A$ be on operator on a Hilbert space $H$ such that the equation $\partial_t f + 
Af = 0$ is well-posed (i.e.\ $-A$ generates a strongly continuous semigroup of 
bounded linear operators on $H$, see for instance~\cite[Ch.~2]{tucsnak_2009} 
or~\cite[Sec.~2.3 and Appendix~A]{coron_2007} for the 
definition).

Let $U$ be a Hilbert space and $B\colon U\to H$ a bounded linear operator. With 
the right choice of $H$, $A$, $U$ and $B$, the problems we are interested in can be 
stated the following way: for every $f_0\in H$, does there exist $u\in L^2(0,T;U)$ 
such that the solution of $\partial_t f +A f = Bu$, $f(0) = f_0$ satisfies $f(T) = 
0$?

For the fractional heat equation~\eqref{eq:rfhe_control} on $\set R^n$, we 
choose $H = L^2(\set R^n)$, $A = (-\Delta)^{\alpha/2}$ with domain $H^\alpha(\set 
R^n)$, $U = L^2(\omega)$ and $B\colon u\mapsto u\mathds 1_{\omega}$. For the 
Kolmogorov equation~\eqref{eq:kolm_control} on $\set R^2$, we choose $H = L^2(\set 
R^2)$, $A = -\partial_v^2 + v^2\partial_x$ (the domain of $A$ is a bit complicated to 
define, see~\cite[Sec.~4]{beauchard_2014a}), $U = L^2(\omega)$ and $B\colon u\mapsto 
u\mathds 1_\omega$.

Whether there exists a $u\in L^2(0,T;U)$ such that the solution of $\partial_tf +Af = 
Bu$, $f(0) = f_0$ satisfies $f(T) = 0$ depends of course of $A$, $B$ and on the 
spaces $H$ and $U$. Let us discuss existing results when $A$ is a parabolic operator 
related to the fractional heat equation or the Kolmogorov equation.

\subsubsection{Null-controllability of the (fractional) heat equation in dimension 
one: the moment method}
Let us first look at the heat equation in dimension one with Dirichlet boundary 
conditions, i.e.\ $H = L^2(0,\pi)$, $D(A) = H^2(0,\pi)\cap H^1_0(0,\pi)$ and $A\colon 
f\in D(A)\mapsto -\partial_x^2 f \in L^2(0,\pi)$. Let us also denote $\lambda_n$ the 
eigenvalues of $A$, and assume that $\lambda_n$ is increasing, so that $\lambda_n = 
n^2$.

A possible strategy to control the heat equation in dimension one is to look for a 
control of the form $\tilde u(t,x) = u(t)v(x)$. In the framework of 
\cref{sec:contr_abst}, this is 
the choice $U = \set R$ and $Bu$ the function $x\in(0,\pi)\mapsto uv(x)$. We 
will call this kind of controls \emph{shaped controls}. This is the strategy pioneered 
by Fattorini and Russel~\cite{fattorini_1971}. Let us describe it briefly.

Let $v\colon (0,\pi)\to \set R$. Let $f_0\in L^2(0,\pi)$, 
let $u\in L^2(0,T)$ and let $f$ be the solution of $(\partial_t -\partial_x^2)f(t,x) 
= u(t)v(x)$, $f(t,0) = f(t,\pi) = 0$ with initial condition $f(0,x) = f_0(x)$. 
Finally, for every $g\in L^2(0,\pi)$, let $c_n(g) \coloneqq \int_0^\pi 
g(x)\sin(nx)\diff x$ be the $n$-th Fourier coefficient of $g$. Then, the relation 
$f(T,\cdot) = 0$ is equivalent to the \emph{moment problem}
\[
 \forall n\in \set N\setminus\{0\},\ \int_0^T e^{(t-T)\lambda_n}u(t)\diff t = 
-\dfrac{e^{-T\lambda_n}c_n(f_0)}{c_n(v)}.  
\]

Fattorini and Russel prove such a $u$ exists by constructing a biorthogonal family to 
$(e^{-t\lambda_n})_{n\in\set N\setminus\{0\}}$, i.e.\ a family of functions 
$(g_n)_{n\in\set N\setminus\{0\}}$ such that $\int_0^T g_n(t)e^{-\lambda_mt} \diff t 
= 1$ if $n = m$ and $0$ if $n\neq m$ (see also~\cite{tenenbaum_2007} for a more 
streamlined proof that this family exists). Then the function $u$ defined by
\[
 u(t) = -\sum_{n>0} \dfrac{e^{-T\lambda_n}c_n(f_0)}{c_n(v)} g_n(T-t)
\]
formally solves the moment problem. Moreover, we can prove some estimates on the 
functions $(g_n)_{n>0}$, and if $c_n(v)$ does not decay too fast when $n\to +\infty$, 
the series that defines $u$ actually converges.

This strategy can be adapted for the fractional heat equation~\eqref{eq:rfhe_control} 
when $\alpha>1$, as Micu and Zuazua~\cite{micu_2006} already remarked. Indeed, the 
construction and estimate on the biorthogonal family relies on the hypotheses 
$\sum_{n>0} |\lambda_n|^{-1}<+\infty$ and $\lambda_{n+1}-\lambda_n\geq c>0$. These 
hypotheses still hold if we replace the operator $A = -\partial_x^2$ on $(0,\pi)$ by 
$(-\partial_x^2)^{\alpha/2}$ as long as $\alpha>1$. Indeed, the eigenvalues are 
now $\lambda_n = n^\alpha$.

On the other hand, if $\alpha\leq 1$, this proof does not work anymore. In fact, Micu 
and Zuazua~\cite[Sec.~5]{micu_2006} proved that if $\alpha\leq 1$, the fractional 
heat equation~\eqref{eq:rfhe_control} is not null-controllable with controls of the 
form $u(t)v(x)$. Miller~\cite[Sec.~3.3]{miller_2006} (see 
also~\cite[Appendix]{duyckaerts_2012}) also gets similar 
results, with similar methods.

But these negative results, based on Müntz Theorem, only concern \emph{scalar 
controls}, i.e.\ the case where the control space is $U = \set R$ (or $\set C$). If 
the control space is larger, say $U = L^2(\omega)$, we cannot rule out the existence 
of a control with Müntz Theorem.  Indeed, there are many equations which are not 
null-controllable with scalar controls, but that are null-controllable with a larger 
control space. One of them is the heat equation in dimension larger than one. Let us 
discuss it now.


\subsubsection{Null-controllability of the (fractional) heat equation and the 
spectral inequality}

Let $\Omega$ be a bounded open subset of $\set R^d$. Let $(\lambda_n)_{n\geq 0}$ be 
the sequence of the eigenvalues of $-\Delta$ on $\Omega$ with Dirichlet boundary 
conditions.\footnote{I.e.\ $D(A) = \{f\in H^1_0(\Omega),\ \Delta f\in L^2(\Omega)\}$ 
where $\Delta f$ is in the sense of distributions.} According to Weyl's law, if 
the dimension $d$ is greater than $1$, then $\sum_{n\geq 0} \lambda_n^{-1} = 
+\infty$, so we cannot prove null-controllability with the moment method. To 
build a control for the heat equation we have to choose a control space that is 
infinite dimensional~\cite[Th.~2.79]{coron_2007}. We choose $U = L^2(\omega)$ and 
$B\colon u\in L^2(\omega) \mapsto u\mathds 1_\omega \in L^2(\Omega)$. We call this 
kind of controls \emph{internal controls}.

To prove the null-controllability of the heat equation in any dimension, Fursikov and 
Immananuvilov~\cite{fursikov_1996} use \emph{parabolic Carleman inequalities}, which 
are weighted energy estimates, to prove (more or less) directly the observability 
inequality that is equivalent to the null-controllability (see for 
instance~\cite[Th.~2.44 or Th.~2.66]{coron_2007} for the equivalence between 
null-controllability and observability).

Independently, Lebeau and Robbiano~\cite{lebeau_1995, lerousseau_2012} developped 
another strategy to prove the null-controllability of the heat equation. This 
strategy yields more insight for our purpose, so let us give more details.

By means of \emph{elliptic Carleman inequalities}, Lebeau and Robbiano proved a
\emph{spectral inequality}, which is the 
following: let $M$ be a connected compact riemannian manifold with boundary, let 
$\omega$ be an 
open subset of $M$, and let $(\phi_i)_{i\in \set N}$ be an orthonormal basis of 
eigenfunctions of $-\Delta$ with associated eigenvalues $(\lambda_i)_{i\in \set N}$, 
then there exists $C>0$ and $K>0$ such that for every sequence of complex numbers 
$(a_i)_{i\in \set N}$ and every $\mu >0$
\begin{equation}
 \Big|\!\sum_{\lambda_i < \mu}a_i \phi_i\Big|_{L^2(M)} \leq 
 Ce^{K\sqrt{\mu}} \Big|\!\sum_{\lambda_i < \mu} a_i \phi_i\Big|_{L^2(\omega)}.
\end{equation}

The key point to deduce the null-controllability of the heat equation from this 
spectral inequality is that if one takes an initial condition of the form $f_0 = 
\sum_{\lambda_i\geq \mu} a_i \phi_i$ with no component along frequencies less than 
$\mu$, the solution of the heat equation decays like $e^{-T\mu}|f_0|_{L^2(M)}$, and 
the exponent in $\mu$ in this decay (i.e.\ $1$) is larger than the one appearing in 
the spectral inequality (i.e.\ $1/2$).\footnote{The exponent $\frac12$ of 
$\mu$ in the spectral inequality is optimal if $\omega$ is a strict open subset of 
$M$~\cite[Proposition 5.5]{lerousseau_2012}.}

For the fractional heat equation~\eqref{eq:rfhe_control}, the dissipation stays 
stronger that the spectral inequality as long as $\alpha>1$. Thus, for $\alpha>1$, we 
can prove the null-controllability with Lebeau and Robbiano's method, as already 
mentioned by Micu and Zuazua~\cite{micu_2006} and Miller~\cite{miller_2006} (see 
also~\cite{miller_2010, duyckaerts_2012}).

Our Theorem~\ref{th:th_heat} proves that the threshold $\alpha>1$ is optimal: if 
$\alpha<1$, then the fractional heat equation is not null-controllable (at least for 
$\Omega = \set T^n$). Note that the case $\alpha = 1$ and $\Omega = \set T$ has 
already been proved to lack null-controllability with internal 
controls~\cite[Th.~4]{koenig_2017}. Let us also mention an article where the 
null-controllability of an equation closely related to our fractional heat equation 
have been investigated~\cite{biccari_2019}.

So it seems that the Lebeau-Robbiano method is in some sense optimal: if the 
dissipation is not stronger than the spectral inequality, then we do not 
have null-controllability. Let us finish with another class of parabolic 
equations for which Lebeau and Robbiano's method does not work: degenerate parabolic 
equations.

\subsubsection{Null-controllability of degenerate parabolic partial differential 
equations}\label{sec:kolm_bib}

Degenerate parabolic equations are equations of the form $\partial_t f(t,x) + A 
f(t,x) 
= \mathds 1_\omega u(t,x)$, $0\leq t\leq T$, $x\in \Omega$, where $A$ is a 
second-order differential operator which is degenerate elliptic, i.e.\ its principal 
symbol $P(x,\xi)$ satisfies $P(x,\xi) \geq 0$ but is zero for some $x\in \Omega$ and 
$\xi \neq 0$.

The interest in the null-controllability of degenerate parabolic equations is more 
recent. We now understand the null-controllability of parabolic equations degenerating 
at the boundary in dimension one~\cite{cannarsa_2008} and two~\cite{cannarsa_2016} 
(see also references therein), where the authors found that these equations where 
null-controllable if the degeneracy is not too strong, but might not be if the 
degeneracy is too strong. To the best of our knwoledge, the only other general family 
of degenerate parabolic equations whose null-controllability has been investigated 
is hypoelliptic quadratic differential equations~\cite{beauchard_2018b, 
beauchard_2017}.

Other equations have been studied on a case-by-case basis. For instance, the 
Kolmogorov equation has been investigated since 2009 
\cite{beauchard_2009, beauchard_2014a, beauchard_2015a}. In these papers, the authors 
found that if $\Omega = \set T\times (-1,1)$ and $\omega = \set T\times (a,b)$ with 
$0<a<b<1$, the Kolmogorov equation is null-controllable in 
large times, but not in time smaller than $a^2\!/2$, and that if $-1<a<0<b<1$, it is 
null-controllable in arbitrarily small time~\cite{beauchard_2014a}. If in the 
Kolmogorov equation~\eqref{eq:kolm_control} we replace $v^2$ by $v$, the 
null-controllability holds in arbitrarily small time if $\omega = \set T\times 
(a,b)$~\cite{beauchard_2009, beauchard_2014a}. On the other 
hand, if we replace $v^2$ by $v^\gamma$ where $\gamma$ is an integer greater than 
$2$ and $\omega = \set T \times (a,b)$, it is never 
null-controllable~\cite{beauchard_2015a}. In this last article, the 
null-controllability of a model of the equation we are interested in, namely the 
equation $(\partial_t + iv^2 (-\Delta_x)^{1/2} - \partial_v^2)g = 0$, is also 
investigated.

Another degenerate parabolic equation is the Grushin equation 
$(\partial_t-\partial_x^2 -x^2 \partial_y^2)f(t,x,y) = \mathds 1_\omega u(t,x,y)$ 
on $\Omega = (-1,1)\times (0,\pi)$ with Dirichlet boundary conditions. If the control 
domain is a vertical band $\omega = (a,b)\times (0,\pi)$ with $0<a<b$, there exists 
a minimum time for the null-controllability to hold~\cite{beauchard_2014}. This 
minimum time has since been computed~\cite{beauchard_2018}. On the other end, if 
the domain control is an horizontal band $\omega = (0,\pi)\times (a,b)$ with 
$(a,b)\subsetneq (0,\pi)$, then the Grushin equation is not null 
controllable~\cite{koenig_2017}. 

Let us finally just mention an article on the heat equation on 
the Heisenberg group since 2017~\cite{beauchard_2017a}, and that some parabolic 
equations on the real half-line, some of them related to the present work, have been 
shown to strongly lack controllability~\cite{darde_2018}.

\subsection{Outline of the proof, structure of the article}\label{sec:plan}
As usual in controllability problems, we focus on \emph{observability 
inequalities} on the \emph{adjoint systems}, that are equivalent to the
null-controllability (see \cite[Theorem 2.44]{coron_2007}). 

Specifically, the null-controllability of the fractional heat 
equation~\eqref{eq:rfhe_control} is equivalent to the existence of $C>0$ such that 
for every solution $g$ of
\begin{equation}\label{eq:rfhe_adj}
 (\partial_t + (-\Delta)^{\alpha/2})g(t,x) = 0 \quad t\in(0,T), x\in \Omega
\end{equation}
we have
\begin{equation}\label{eq:obs_rfhe}
 |g(T,\cdot)|_{L^2(\Omega)} \leq C|g|_{L^2((0,T)\times \omega)}.
\end{equation}
So, to disprove the null controllability, we only have to find solutions 
of~\eqref{eq:rfhe_adj} that are concentrated outside $\omega$. To construct such 
solutions, we consider initial states that are (essentially) 
\emph{semiclassical coherent states}, 
i.e.\ initial states of the form $g_{0,h}\colon x\mapsto h^{-1/4} 
e^{-(x-x_0)^2\!/2h +ix\xi_0/h}$. 
We will prove that solutions of Eq.~\eqref{eq:rfhe_adj} with these initial conditions 
stay concentrated around $x_0$. More precisely, we get asymptotic expansion of these 
solutions 
thanks to the saddle point method. We do this informally at first, in 
\cref{sec:informal}, then rigorously in \cref{sec:heat_R} in the case $\Omega = \set 
R$ and in \cref{sec:heat_bounded} in the case $\Omega = \set T$. This proof relies on 
some technical computations that are done in \cref{sec:technical}. These computations 
are carried over in a slightly general framework, that allows to directly treat the 
other equations, namely the Kolmogorov-type equations and the improved 
Boussinesq equation. We also sketch the proof of the generalization of 
\cref{th:gen_frac} in higher dimension in \cref{sec:gen_frac_higher_dim}.

Let us finish this introduction by explaining how the Kolmogorov equation for
$\Omega_x = \Omega_v = \set R$ and the fractional heat equation are related. The 
first eigenfunction of $-\partial_v^2 + i\xi v^2$ on $\set R$, is\footnote{Here and 
in 
all this paper, we choose the branch of the square root with positive real part.} 
$e^{-\ssqrt{i\xi}\s v^2\!/2}$ (up to a normalization 
constant), with eigenvalue $\sqrt{i\xi}$. So, $\Phi_\xi \colon(x,v) \in \set R^2 
\mapsto 
e^{i\xi x-\sqrt{i\xi}\s v^2\!/2}$ is a generalized eigenfunction of the Kolmogorov 
operator $v^2\partial_x - \partial_v^2$, with eigenvalue $\sqrt{i\xi}$. So, the 
solution of the Kolomogorov equation $(\partial_t + v^2\partial_x - \partial_v^2)f= 
0$ with initial condition $f(0,x,v) = \int_{\set R} a(\xi) \Phi_\xi(x,v)\diff\xi$ is 
$f(t,x,v) = \int_{\set R} a(\xi) \Phi_\xi(x,v)e^{-\ssqrt{i\xi}\s t}\diff\xi$. This 
suggests that, dropping the $v$ variable for the moment, the Kolmogorov equation 
behaves like an equation where the eigenfunctions are the $e^{i\xi x}$ with 
eigenvalue $\sqrt{i\xi}$, i.e.\ the equation $(\partial_t + \sqrt i 
(-\Delta_x)^{1/4})f(t,x) = 0$ with $x\in \set R$.

Based on this observation and the non-null-controllability result of the rotated 
fractional heat equation on the whole real line, we prove in \cref{sec:kolm_R} that 
the Kolmogorov equation is not null-controllable in the case $\Omega = 
\set R\times \set R$. For Kolmogorov's equation on $\Omega = \Omega_x\times 
(-1,1)$, we need some information on the eigenvalues and the eigenfunctions, which 
are not explicit anymore. We already proved most of what we need in 
another article \cite[Section 4]{koenig_2017}. We prove the non-null-controllability 
of Kolmogorov equation with $\Omega_v = (-1,1)$ in \cref{sec:kolm_bounded}.

Finally, we sketch the proof for the Kolmogorov equation with $v$ instead of $v^2$ 
and for the improved Boussinesq equation in \cref{sec:other_eq}.

\section{Informal presentation of the proof}\label{sec:informal}
As we explained in \cref{sec:plan}, we will try to disprove the observability 
inequality~\eqref{eq:obs_rfhe}. We only discuss here the case $\Omega = \set R$.

Since the fractional heat equation is invariant by 
translation, we may assume that $\omega \subset \{|x|>\delta\}$ for some $\delta>0$.
Then, for $h>0$, we consider the solution $g_h$ of the fractional heat 
equation~\eqref{eq:rfhe_adj} with initial condition $g_{0,h}(x) = 
e^{-x^2\!/2h+i\xi_0/h}$ with some $\xi_0>0$. The solution of the fractional heat 
equation is then
\[
 g_h(t,x) = \frac1{2\pi}\int_{\set R} e^{-t|\xi|^{\alpha}+ix\xi}\mathcal 
F(g_{0,h})(\xi)\diff \xi,
\]
where $\mathcal F$ is the Fourier transform defined by $\mathcal F f(\xi) = 
\int_{\set R} f(x) e^{-ix\xi} \diff x$.
But the Fourier transform of $g_{0,h}$ has a closed-form expression. Indeed, 
$\mathcal 
F(e^{-x^2\!/2})(\xi) =  \sqrt{2\pi} e^{-\xi^2\!/2}$, and using the properties of the 
Fourier transform (scaling and translation), we find $\mathcal F(g_{0,h})(\xi) = 
\sqrt{2\pi h}\s e^{-(h\xi-\xi_0)^2\!/2h}$. Thus
\[
 g_h(t,x) = \sqrt{\frac h{2\pi}}\int_{\set R} 
e^{-(h\xi-\xi_0)^2\!/2h +ix\xi-t|\xi|^{\alpha}}\mathcal\diff \xi.
\]
If we make the change of variables $\xi' = h\xi$, we find
\begin{equation}\label{eq:solution_simple}
 g_h(t,x) = \frac1{\sqrt{2\pi h}}\int_{\set R} 
e^{-(\xi-\xi_0)^2\!/2h +ix\xi/h-t|\xi|^{\alpha}\!/h^\alpha}\mathcal\diff \xi.
\end{equation}

Notice that if $h$ is small, the term $e^{-(\xi-\xi_0)^2\!/2h}$ is concentrated 
around $\xi = \xi_0$, and so is the integrand. 
Thus, the major part of this integral comes from a neighborhood of $\xi_0$. In this 
situation, we can compute asymptotic expansion with the saddle point method.

More precisely, the saddle point method (see for instance 
\cite[Ch.~2]{sjostrand_1982}) is a 
way to compute asymptotic expansion of integrals of the 
form $I(h) =\int e^{\phi(x)/h} u(x)\diff x$ in the limit $h\to 0^+$, where $\phi$ 
and $u$ are entire functions. 

If the main contribution in the integral $I(h)$ comes from a nondegenerate 
critical point of $\phi$ at $x=0$, the ``standard'' saddle point method gives
\begin{equation}
 I(h) \sim e^{\phi(0)/h}\sum_{\smash[b]{k}} \sqrt{2\pi}\,\tilde u_{2k} h^{k+1/2}
\end{equation}
where the $\tilde u_{2k}$ are of the form $A_{2k} u(0)$, and $A_{2k}$ are 
differential operators of order $2k$, with in particular $\tilde u_0 = u(0) 
|\phi''(0)|^{-1/2}$.

The term $e^{-|\xi|^\alpha\!/h^\alpha}$ prevents us from applying the saddle point 
method to Eq.~\eqref{eq:solution_simple} as-is, but let us pretend we can do it 
anyway (the rigorous computation will 
be carried in \cref{sec:technical}). Since the critical point of $\xi\mapsto 
-(\xi-\xi_0)^2\!/2+ix\xi$ is $\xi_\cri = \xi_0+ ix$, we get from the saddle point 
method
\[
 g_h(t,x) \approx  e^{-(\xi_\cri-\xi_0)^2\!/2h +ix\xi_c/h -t 
\xi_c^\alpha\!/h^\alpha} =  e^{-x^2\!/2h +ix\xi_0/h 
-t(\xi_0+ix)^\alpha\!/h^\alpha}.
\]
By integrating this asymptotic expansion, we get the following lower bound on the 
left-hand 
side of the observability inequality~\eqref{eq:obs_rfhe}: 
\begin{equation}\label{eq:lower_simple}
 |g_h(T,\cdot)|_{L^2(\set R)}^2 \geq |g_h(T,\cdot)|_{L^2(|x|<\delta)} \geq ch^{-1} 
\int_{|x|<\delta} e^{-x^2\!/h -C h^{-\alpha}}\diff x \geq c' e^{-C' h^{-\alpha}}.
\end{equation}
Again by integrating the asymptotic expansion on $g_h$, we get the following upper on 
the right-hand side of the observability inequality
\begin{equation}\label{eq:upper_simple}
 |g_h|_{L^2([0,T]\times \omega)} \leq |g_h|^2_{L^2([0,T]\times\{|x|>\delta\})}
 \leq Ch^{-1} \int_{[0,T]\times \{|x|>\delta\}} e^{-x^2\!/h -cth^{-\alpha}}\diff 
t\diff x \leq C'e^{-\delta^2\!/h}.
\end{equation}
Comparing this upper bound~\eqref{eq:upper_simple} and the lower 
bound~\eqref{eq:lower_simple}, and taking the limit $h\to 0^+$, we see that the 
observability inequality~\eqref{eq:obs_rfhe} cannot be true if $\alpha<1$. Thus the 
fractional heat equation is not null-controllable.

\section{Some technical computations}\label{sec:technical}
Before we make rigorously the proof outlined in \cref{sec:informal}, we 
carry here some computations as a technical preparation.

\subsection{Perturbation of the saddle point method}\label{sec:saddle_point}

The ``standard'' saddle point method can be stated in the following 
way~\cite[Th.~2.1]{sjostrand_1982}.
\begin{theorem}\label{th:saddle_point_quad}
 Let $U$ be an open bounded neighborhood of $0$ in $\set R^d$. For every 
$N\in\set N$, there exists $C_N>0$ such that for every $h>0$ and every holomorphic 
function $u$ on a complex neighborhood $V$ of $\overline U$, 
 \[
 \int_{U} e^{-x^2\!/2h} u(x) \diff x = \sum_{j=0}^{N-1} 
\frac{h^{d/2+j}}{(2\pi)^{d/2} j!2^j} (\Delta)^j u(0) + R_N(h),
 \]
 where
 \[
 |R_N(h)| \leq C_N \lambda h^{d/2+N} \sup_{z\in V} |u(z)|.
 \]
\end{theorem}
By using the Morse lemma (see for instance~\cite[Lemma C.6.1]{hormander_2007}), one 
can often transform integral of the form $\int e^{\phi(x)/h}u(x)\diff x$ into 
integrals of the form of \cref{th:saddle_point_quad}, plus some exponentially small 
error. Notice that in this theorem, the phase $-x^2\!/2$ does not depend on $h$. 
However, to rigorously treat the integral of Eq.~\eqref{eq:solution_simple}, we need 
to allow the phase to depend on $h$.

\begin{proposition}[Perturbation of the saddle point 
method]\label{th:saddle_point_local}
 Let $h_0>0$ and $\epsilon\colon (0,h_0]\to \set R_+$ such that $\epsilon(h)\to 0$ as 
$h\to 0$. Let $U\subset R$ be an open interval around $0$. Let 
$V$ be a complex open bounded neighborhood of $\overline{U}$ in $\set C$.

For every $0<h\leq h_0$, let $r_h\colon V\to \set C$ such that 
\begin{enumerate}
 \item $\forall 0<h\leq h_0$, $r_h$ is holomorphic on $V$,
 \item there exists $C>0$ such that for every $0<h\leq h_0$ and $\xi\in V$, 
$|r_h(\xi)|\leq C \epsilon(h)$.
\end{enumerate}
For such $r_h$, we define $|r|_\epsilon \coloneqq \inf\{C>0, \forall 0<h\leq h_0, 
\forall \xi\in V, |r_h(\xi)|\leq C\epsilon(h)\}$.

For every $0<h\leq h_0$, let $u_h$ be a holomorphic bounded function on $V$. We 
consider
\[
 I_{h,r}(u) \coloneqq \int_{U} e^{-\xi^2\!/2h +r_h(\xi)/h}u_h(\xi)\diff \xi.
\]

Let $M>0$. We have uniformly in $|r|_\epsilon<M$, and uniformly in $u_h$ holomorphic 
bounded on $V$
\[
 I_{h,r}(u) = \sqrt{2\pi h}\s e^{r_h(0)/h + \bigO(\epsilon(h)^2/h)}\Big(u_h(0) + 
\bigO\big((h+\epsilon(h))|u_h|_{L^\infty(V)}\big)\Big).
\]
\end{proposition}

\begin{proof}[Proof of \cref{th:saddle_point_local}]
The strategy is to see 
$\varphi_{h,r}(\xi) = -\xi^2\!/2 
+r_h(\xi)$ as the phase, and to change the integration 
variable and the integration path to rewrite $I_{h,r}(u)$ in the form 
$I_{h,r}(u) = \int e^{-\xi^2\!/2h} v_h(\xi)\diff \xi$, even if this change of 
variables depends on $h$, and then apply \cref{th:saddle_point_quad}.

In this proof, $M>0$ is fixed. We also choose $V'\subset \set C$ convex such 
that\footnote{We denote $A\csubset B$ for ``$\overline A$ compact and $\overline 
A\subset B$''.} $U\csubset V'\csubset V$. Also, we use the convention that $C$ 
denotes a constant, that depends only on $\epsilon$, $M$, $V$ and $V'$, but not on 
$h$ small enough, $|r|_\epsilon\leq M$ or $\xi\in V'$, and that may be different each 
line.

\step{Step 1: finding the critical point.} We claim that for $h$ small enough, for 
every $|r_h|_\epsilon \leq M$, there exists a unique critical point $\xi_{\cri,h,r}$ 
of $\varphi_{h,r}$ in $\overline{V'}$, and that this critical point is non-degenerate.

Indeed, $\xi\in\overline{V'}$ is a critical point of $\varphi_{h,r}$ if and only if
$\partial_\xi r_h(\xi) = \xi$. But for every\footnote{We will frequently use the 
following standard consequence of the integral Cauchy formula: if $f$ is holomorphic 
on $D(z_0,r)$, then $|f^{(n)}(z_0)|\leq C_{n,r}|f|_{L^\infty(D(z_0,r))}$.} $\xi\in 
V'$, $|\partial_\xi r_h(\xi)|\leq C |r_h|_{L^\infty(V)}\leq C |r|_\epsilon 
\epsilon(h)$. Moreover, since $V'$ is convex, according to the mean value 
inequality, for $\xi,\xi'\in \overline{V'}$, $0<h\leq h_0$ and $|r|_\epsilon \leq M$,
\[
|\partial_\xi r_h(\xi) - \partial_\xi r_h(\xi')| \leq \sup_{V'} |\partial_\xi^2 r_h| 
|\xi-\xi'| \leq C \sup_V |r_h| |\xi-\xi'| \leq C |r|_\epsilon \epsilon(h) |\xi-\xi'| 
\leq CM \epsilon(h) |\xi-\xi'|.
\]
Thus, if $h$ is small enough such that $CM\epsilon(h) <1$, $\xi\mapsto \partial_\xi 
r_h(\xi)$ takes its value in $\overline{V'}$ and is a contraction. Then, according to 
the contraction mapping theorem, there exists a unique $\xi_{\cri,h,r}\in 
\overline{V'}$ such that 
$\partial_\xi r_h(\xi_{\cri,h,r}) = \xi_{\cri,h,r}$. This is the unique critical 
point of $\varphi_{h,r}$ in $\overline{V'}$.

Note that we have $|\xi_{\cri,h,r}|\leq C |r|_\epsilon \epsilon(h)$, where $C$ 
depends only on $\epsilon$, $M$, $V$ and $V'$. Also, the critical value 
$c_{h,r}\coloneqq \varphi_{h,r}(\xi_{\cri,h,r})$ satisfies 
\[|c_{h,r}-r_h(0)| = \Big|\frac{-\xi_{\cri,h,r}^2}2 + r_h(\xi_{\cri,h,r}) 
-r_h(0)\Big| \leq \frac{|\xi_{\cri,h,r}|^2}4 + 
|\xi_{\cri,h,r}|\sup_{V'}|\partial_\xi r_h| \leq C \epsilon(h)^2 |r|_\epsilon.
\]

Moreover, we have $|\partial_\xi^2 \varphi_{h,r}(\xi_{\cri,h,r}) +I| = 
|\partial_\xi^2 r_h(\xi_{\cri,h,r})| \leq C|r|_\epsilon \epsilon(h)$. So, if $h$ is 
small enough, the critical point $\xi_{\cri,h,r}$ is nondegenerate.

\step{Step 2: change of variables.}
Now that we know where the critical point is, and what the critical value of 
the phase is, we want to change the intergation variables to rewrite $I_{h,r}(u)$ 
in the form $I_{h,r}(u) = \int e^{-\xi^2\!/2h}\tilde u_h(\xi)\diff \xi$.

We define $\psi_{h,r}(\xi)$ on $V'$, for $h$ small enough by 
\[\psi_{h,r}(\xi+\xi_{\cri,h,r}) = 
\xi\left(2\int_0^{1}\partial_x^2 \varphi_{h,r}(s\xi)\diff 
s\right)^{1/2}.
\]
According to Taylor's formula, for every $\xi\in V'$, we 
have $\varphi_{h,r}(\xi) =c_{h,r}+ \psi_{h,r}(\xi)^2\!/2$.

So, by the change of variables/integration path $\eta =\psi_{h,r}(\xi)$, we 
have:
\begin{equation}
 I_{h,r}(u) 
 = e^{c_{h,r}/h}\int_{\psi_{h,r}(U)}e^{-\eta^2\!/{2h}} u_h(\xi(\eta)) \frac{\diff[0] 
\xi}{\diff[0] \eta} \diff \eta =
 e^{c_{h,r}/h}\int_{\psi_{h,r}(U)}e^{-\eta^2\!/{2h}} \tilde u_h(\xi(\eta))\diff \eta,
\end{equation}
where $\tilde u_h(\eta) \coloneqq u_h(\xi(\eta))\diff\xi/\diff[-6]\eta$.

Note that according to the definition of $\psi_{h,r}$, for $\xi \in V'$, we have 
$|\psi_{h,r}(\xi) - \xi|\leq C\epsilon(h) 
|r|_\epsilon$. Thus, we have for every $\eta\in \psi_{h,r}(V')$, 
$|\psi_{h,r}^{-1}(\eta) - \eta|\leq C \epsilon(h)|r|_\epsilon$. Thus, $\diff 
\eta/\diff[-6] \xi = 1 +\bigO(\epsilon(h)|r|_\epsilon)$.

\step{Step 3: conclusion.}
So, by the standard saddle point method (\cref{th:saddle_point_quad}):
\begin{equation}\label{eq:saddle_precise}
\begin{aligned}
 I_{h,r}(u) &= e^{c_{h,r}/h}\sqrt{2\pi h}\left(\tilde u_h(0)+\bigO\big(h|\tilde 
u_h|_{L^\infty(V)}\big)\right)\\
&= e^{c_{h,r}/h}\sqrt{2\pi h}\left(u_h(0)+\bigO\big((h+\epsilon(h))| 
u_h|_{L^\infty(V)}\big)\right)
\end{aligned}
\end{equation}
and since $c_{h,r}/h = r_h(0)/h + \bigO(\epsilon(h)^2/h)$ the Proposition is 
proved.
\end{proof}

\subsection{The framework: truncated coherent states}
As we said in the introduction, our aim is to prove the lack of null-controllability 
of several equations, that behave in some sense like the fractional heat equation. 
However, treating these equations requires more precise estimates than the fractional 
heat equation does.

To avoid making similar computations several times, we do them in a somewhat 
general framework. This way, we will be able to treat the other equations 
(Kolmogorov, 
etc.) directly.

\begin{hypothesis}\label{hyp}
We consider the following domain, constants and functions:
\begin{enumerate}
 \item let $K>0$ and $\Cc = \{\xi \in \set C, \Re(\xi)>K, 
|\Im(\xi)|<K^{-1}\Re(\xi)\}$,
 \item let $\xi_0>0$ large enough (for instance $\xi_0 = 4(K+1)$),
 \item let $\delta>0$ small enough such that for every $\xi\in\set R$ and $x\in 
\set R$, $|\xi-\xi_0|<\delta$ and $|x|<\delta$ implies $\xi+\xi_0 + ix \in \Cc$,
 \item let $\chi\in C_c^\infty(-\delta,\delta)$ such that $0\leq \chi\leq 1$ and 
$\chi\equiv1$ on a neighborhood of $0$, say $(-\delta_2,\delta_2)$,
 \item let $X$ be a topological space, and $0<h_0<1$, and for every $\gamma\in X$ and 
$0<h\leq h_0$, let $\rho_{\gamma,h}\colon \Cc\to \set C$ be a holomorphic function,
 \item we assume that uniformly in $\gamma \in X$ and $0<h\leq 
h_0$, $\rho_{\gamma,h}(\xi) = \smallo(|\xi|)$ in the limit $|\xi|\to +\infty$, 
$\xi\in \Cc$,
 \item finally, for every $0<h\leq h_0$, we define
  \[
   \epsilon(h) \coloneqq \sup_{|\xi|<\delta, |x|<\delta, \gamma\in X} h 
\left|\rho_{\gamma,h}\left(\frac{\xi+\xi_0+ix}h\right)\right|.
  \]
\end{enumerate}
\end{hypothesis}

The goal of the next subsections is to get upper and lower bounds on the following 
integral, where $(v_h)$ is a family of bounded holomorphic functions on $\Cc$:
\begin{equation}\label{eq:defI}
 \Ighv = \int_{\set R} \chi(\xi-\xi_0) e^{-(\xi-\xi_0)^2\!/2h +ix\xi/h + 
\rho_{\gamma,h}(\xi/h)} v_h(\xi)\diff \xi.
\end{equation}

\begin{remark}
 \begin{enumerate}
  \item The hypothesis 6 implies that $\epsilon(h)\to 0$ as $h\to 0$.
  \item For instance, if $\rho$ is independent of $\gamma$ and $h$, with $\rho(\xi) = 
|\xi|^{\alpha}$ and $0\leq \alpha < 1$, then we have $C^{-1}h^{1-\alpha} < 
\epsilon(h) < C h^{1-\alpha}$ for some $C>0$.
  \item In the applications, we will typically choose $X = [0,T]$ and for $t\in X$, 
$\rho_{t,h}(\xi) = -t \rho(\xi)$ with some $\rho \colon \Cc \to \set C$ such that 
$\rho(\xi) = \smallo(|\xi|)$. We will also usually choose $v_h = 1$. In that 
case, $g_h(t,x) \coloneqq I_{t,h,1}(x)$ is solution of $(\partial_t + 
\rho(\sqrt{-\Delta}))g_h = 1$ with initial condition $g_{0,h}(x) = \sqrt{2\pi 
h}\s\chi(-ih\partial_x -\xi_0) e^{-x^2\!/2h+ix\xi_0/h}$, which belongs in $L^2(\set 
R)$. However, some applications will require a larger parameter space $X$ and $v_h 
\neq 1$.
 \end{enumerate}
\end{remark}

\subsection{Asymptotics for the evolution of coherent states}
\begin{proposition}\label{th:ighv_asy}
 Assuming \cref{hyp}, we have uniformly in $\gamma\in X$, $|x|$ 
small enough, $\gamma\in X$  and $v_h\colon \Cc \to \set C$ holomorphic bounded
\begin{align*}
 \Ighv &= \int_{\set R} \chi(\xi-\xi_0) e^{-(\xi-\xi_0)^2\!/2h +ix\xi/h + 
\rho_{\gamma,h}(\xi/h)} v_h(\xi)\diff \xi \\
&= \sqrt{2\pi h}\s e^{ix\xi_0/h - x^2\!/2h +\rho_{\gamma,h}\big(\frac{\xi_0+ix}h\big) 
+ \bigO\big(\frac{\scriptstyle\epsilon(h)^2}h\big)} 
\big(v_h\left(\xi_0+ix\right) + 
\bigO\big((h+\epsilon(h))|v_h|_{L^\infty(\Cc)}\big)\Big),
\end{align*}
in the limit $h\to 0^+$.
\end{proposition}
\begin{remark}\label{rk:ighv_asy}
 For most of the applications, we don't care about the term $\rho((\xi_0+ix)/h)$, 
apart from the fact it is $\bigO(\epsilon(h)/h)$. Moreover, we usually have $v_h(\xi) 
= 1$, or at least $v_h(\xi) \to 1$ as $h\to 0$, uniformly in $\xi\in \Cc$. Under 
this condition, \cref{th:saddle_point_local} implies the slightly less precise 
asymptotic expansion
\[
 \Ighv 
= \sqrt{2\pi h}\s e^{ix\xi_0/h - x^2\!/2h + 
\bigO\big(\frac{\scriptstyle\epsilon(h)}h\big)} 
\left(1 + \smallo(1)\right),
\]
which will be enough in most cases.
\end{remark}

\begin{proof}
 The idea is that this integral has almost the form of \cref{th:saddle_point_local}. 
Let us actually rewrite it as such.
 
 \step{Step 1: change of integration path.}
 Note that $\chi(\xi-\xi_0) \equiv 1$ for $\xi_0-\delta_2 < \xi < \xi_0+2\delta_2$, 
so $\chi(\xi-\xi_0)$ extends holomorphically to $|\Re(\xi)-\xi_0|<\delta_2$ by $1$. 
Moreover, if $\xi \in \set C$ and $|\Re(\xi)-\xi_0|<\delta_2$, $|\Im(\xi)|<\delta_2$, 
then, according to \cref{hyp} item 3, $\xi\in \Cc$.
 We deduce that the integrand of $\Ighv$ is holomorphic on $\{\xi\in \set C, 
|\Re(\xi)-\xi_0|<\delta_2, |\Im(\xi)|<\delta_2\}$.

Thus, we can change the integration path of $\Ighv$, as long as we modify it only 
between $\xi_0-\delta_2$ and $\xi_0+\delta_2$, and that the modified part of the 
integration path stays inside $\{|\Re(\xi)-\xi_0|<\delta_2, |\Im(\xi)|< \delta_2\}$.
\begin{figure}
 \begin{center}
  \begin{tikzpicture}[baseline = (O), scale=0.6]
 \node (O) at (0,0){};
 \draw (0,0.7) node(C){} \cross{0.2};
 \draw (C) +(45:4) -- +(225:2)
       (C) +(-45:2) -- +(135:4);
 \node at(2.5,0|-C) {$\Re(\phi_x)>0$};
 \node at(-2.5,0|-C) {$\Re(\phi_x)>0$};
 \node at(-0,-1) {$\Re(\phi_x)<0$};
 \draw [->] (-4,0) -- (4,0);
 \draw [->] (0,-0.5) -- (0,4);
 \draw[|-|, very thick, blue] (-2,0)node(C1){} -- (2,0) node(C2){};
 \draw[->, blue] (2,-0.5) node[below right, blue]{$\chi^{-1}(1)$} -- (1.2,0);
 \draw[red, thick, postaction={decorate}, decoration={markings, mark = between 
positions 0.2 and 0.8 step 0.2 with{\arrow{>}}}] (-4,0) -- 
	(C1.center) .. controls (-0.7,0) and (-0.9,0.7) .. 
	($(C.center)+(-0.4,0)$) -- ($(C.center)+(0.4,0)$)
	.. controls (0.9,0.7) and (0.7,0) .. 
	(C2.center) -- (4,0);
 \node[right=0.1] at (C) {$ix$};
\end{tikzpicture}%
 \end{center}
 \caption{In blue, the interval where $\chi = 1$. The diagonal lines define four 
sectors; in the left and right ones, $\Re(\varphi_x) > 0$ and in the top and bottom 
ones, $\Re(\phi_x)<0$. In red, the path of integration we chose in the integral 
defining $\Ighv$ (Eq.~\ref{eq:defI}). If $|x|$ is small enough, we choose a path that 
goes through the saddle point $ix$, but that stays in 
$\{\Re(\varphi_x)>0\}$.}\label{fig:saddle_point}
\end{figure}

Let $\xi_\cri = \xi_0 + ix$ be the critical point of $\varphi_x(\xi) \coloneqq 
-(\xi-\xi_0)^2\!/2 +ix\xi$. We choose an integration path $\Gamma$ parametrized by 
$\Gamma(t) = t+ \xi_0 +ix \chi_2(t)$, where $\chi_2\in 
C_c^\infty(-\delta_2,\delta_2)$ 
with $0\leq \chi_2\leq 1$ and $\chi_2\equiv 1$ on a neighborhood of $0$ (say 
$(-\delta_3,\delta_3)$). Then, we have
\begin{align*}
 \Ighv &= \int_{\Gamma} \chi(\Re(z)) e^{\varphi_x(z)/h - \rho_{\gamma,h}(z/h)} v_h(z) 
\diff z\\
 &= \int_{-\delta_3}^{\delta_3} e^{-t^2\!/2h - x^2\!/2h + ix\xi_0/h 
-\rho_{\gamma,h}\left(\frac{\xi_0 + ix +t}h\right)} 
v_h\left(\xi_0+ix+t\right) \diff t + R_{\gamma,h,v}(x),
\end{align*}
where we used that for $\delta_3<t<\delta_3$, $\varphi_x(\Gamma(t)) = -(t+ix)^2\!/2 
+ix(t+\xi_0+ix) = -t^2\!/2-x^2\!/2+ix\xi_0$, and where $R_{\gamma,h,v}(x)$ is the 
part of the integral out of $(-\delta_3,\delta_3)$.

\step{Step 2: upper-bound for the remainder.} Since $\supp(\chi)\subset 
(-\delta,\delta)$, so that the integrand is $0$ for $|t|>\delta$, the remainder 
$R_{\gamma,h,v}(x)$ is upper-bounded by
\[
 |R_{\gamma,h,v}(x)| \leq 2\delta e^{-\delta_3^2\!/2h + 
\epsilon(h)/h}|\chi_2'|_{L^\infty}.
\]
where we used the definition of $\epsilon$ (\cref{hyp} item 7) to bound 
$\rho_{\gamma,h}$. 
Moreover, $\epsilon(h)\to 0$, so we have (for instance)
\begin{equation}\label{eq:upper_remainder}
 |R_{\gamma,h,v}(x)| \leq C e^{-\delta_3^2\!/4h}|v_h|_{L^\infty}.
\end{equation}

\step{Step 3: asymptotic expansion for the integral in $(-\delta_3,\delta_3)$.}
To get an asymptotic expansion of the part of the integral between $-\delta_3$ and 
$\delta_3$, we can apply \cref{th:saddle_point_local}. Indeed, for $0<h\leq h_0$, 
$\gamma\in X$, $|x|<\delta_3/2$ and $\xi$ in a small enough complex neighborhood $U$ 
of $[-\delta_3,\delta_3]$, let
\[
 r_{\gamma,x,h}(\xi) = h\rho_{\gamma,h}\left(\frac{\xi_0+ix+\xi}h\right).
\]
By definition of $\epsilon(h)$, we have $|r_{\gamma,h,x}|<\epsilon(h)$, or, in other 
words, $|r_{\gamma,x,h}|_\epsilon \leq 1$. For the same parameters, we also define
\[
 u_{x,h}(\xi) = v_h\left(\xi_0+ix+\xi\right).
\]
Then, according to \cref{th:saddle_point_local},
\begin{align*}
 \Ighv &= \int_{-\delta_3}^{\delta_3} e^{-x^2\!/2h +ix\xi_0/h -t^2\!/2h + 
r_{\gamma,x,h}(t)/h} 
u_{x,h}(t) \diff t + R_{\gamma,h,v}(x) \\
 &= \sqrt{2\pi h}\s e^{-x^2\!/2h + ix \xi_0/h + r_{\gamma,x,h}(0)/h}\Big(u_{x,h}(0)+ 
\bigO((h+\epsilon(h))|u_{x,h}|_{L^\infty(U)}\big)\Big)+ R_{\gamma,h,v}(x)
\end{align*}
uniformly in $\gamma\in X$, $|x|<\delta_3/2$ and $v_h$ holomorphic bounded on $\Cc$.

\step{Step 4: conclusion.}
With the upper-bound~\eqref{eq:upper_remainder} on $R_{\gamma,h,v}(x)$, the claimed 
asymptotic expansion follows.
\end{proof}

\subsection{Upper bounds for the evolution of coherent states}
We will also need upper bounds for $\Ighv$ that hold for large $x$.
\begin{proposition}\label{th:ighv_upper}
 We assume~\cref{hyp}, except that the item 6 is only assumed to hold 
\emph{locally} uniformly in $\gamma\in X$ (and uniformly in $0<h\leq h_0$). We define 
for $\gamma\in X$ and $0<h\leq h_0$
\[
 \epsilon_\gamma(x) \coloneqq \sup_{|\xi|<\delta,|x|<\delta} h\Re 
\left(\rho_{\gamma,h}\left(\frac{\xi+\xi_0+ix}h\right)\right).
\]
Let $\eta>0$. For every $N>0$, there exist $c,C>0$ such that for every $|x|>\eta$, 
$\gamma\in X$ and $v_h$ holomorphic bounded on $\Cc$,
\[
 |\Ighv| \leq \frac C{|x|^N} 
e^{-c/h+\epsilon_\gamma(h)/h}|v_h|_{L^\infty(\Cc)}.
\]
\end{proposition}
\begin{remark}\label{rk:ighv_upper}
 \begin{enumerate}
  \item 
In the applications, we will typically choose $X = \set R_+$ and 
$\rho_{t,h}(\xi) = -t \rho(\xi)$.
  \item For instance, consider the case $X = \set R_+$ and $\rho_{t,h}(\xi) = -t z 
\xi^\alpha$, where $\Re(z)>0$. This choice is relevant to the equation 
$(\partial_t + z (-\Delta)^{\alpha/2})f(t,x) = \mathds 1_\omega u(t,x)$. If 
$K>0$ is large enough, for every $\xi\in \Cc$, $\Re(z\rho(\xi))\geq c 
|\xi|^\alpha$ for some $c>0$.\footnote{Indeed, if $z = r_0 e^{i\theta_0}$, then 
$|\theta_0|<\pi/2$. And if $\xi = re^{i\theta}$, then $\Re(z\xi^\alpha) = 
r_0r^{\alpha}\cos(\alpha\theta+\theta_0)$. But if $\xi \in \Cc$, then 
$r\lvert\sin(\theta)\rvert \leq r K^{-1} \cos(\theta)$, or $|\theta|\leq 
\atan(K^{-1})$. So, if $K$ is large enough, for every $\xi = re^{i\theta}\in \Cc$, 
$|\alpha \theta + \theta_0| \leq \pi/2-\tau$ for some $\tau>0$. Then, 
$\Re(z\xi^\alpha) = r_0 r^\alpha \cos(\alpha\theta+\theta_0) \geq r_0 r^\alpha 
\cos(\pi/2-\tau) = c|\xi|^\alpha$.} Then $\epsilon_t(h) \leq -ct 
h^{1-\alpha}$. \end{enumerate}
\end{remark}

\begin{proof}
\step{Step 1: integration by parts.}
 First, we integrate by parts to get the decay in $x$. As in the previous proof, we 
denote $\varphi_x(\xi) = -(\xi-\xi_0)^2\!/2 +ix\xi$. We remark that 
$h\partial_\xi e^{\varphi_x(\xi)/h} = -(\xi-\xi_0-ix)e^{\varphi_x(\xi)/h}$. Thus, 
with $L_x = \frac1h\partial_\xi \frac1{\xi-\xi_0-ix}$, we have
\[
 \Ighv = \int_{-\infty}^{+\infty} e^{-\varphi_x(\xi)} L_x^N\left(\chi(\xi-\xi_0) 
e^{\rho_{\gamma,h}\big(\frac\xi h\big)}v_h\left(\xi \right)\right)\diff \xi.
\]

\step{Step 2: change of integration path.}
Next, as in the proof of \cref{th:ighv_asy}, we can change the integration path 
between $\xi_0-\delta_2$ and $\xi_0+\delta_2$, as long as this modification stays 
inside $\{|\Re(\xi)-\xi_0|<\delta_2, |\Im(\xi)|<\delta_2\}$. We choose $\chi_2$ as in 
the proof of \cref{th:ighv_asy}, i.e.\ $\chi_2\in C_c^\infty(-\delta_2,\delta_2)$, 
$0\leq \chi_2\leq 1$, $\chi_2 \equiv 1$ on $(-\delta_3,\delta_3)$. Then we choose the 
path $\Gamma(t) = t+i\eta_2 \sign(x)\chi_2(t-\xi_0)$, where $\eta_2>0$ is small enough, 
for instance $\eta_2 = \min(\eta/2,\delta_2/2)$ (see \cref{fig:saddle_point1}).
\begin{figure}
 \begin{center}
  \begin{tikzpicture}[baseline = (O), scale = 0.6]
 \def\yzero{3}
 \def\yun{0.8}
 \def\xzero{1.5}
 \def\b{2.598}
 \def\tzero{0.5493}
 \node (O) at (0,0){};
 \draw (0,\yzero) node(C){} \cross{0.2};
 \node (C0) at (0,\yun){};
 \node[right=0.1] at (C) {$ix$};
 \draw (C) +(45:2) -- +(225:5)
       (C) +(-45:5) -- +(135:2);
 \draw [->] (-4,0) -- (4,0);
 \draw [->] (0,-1) -- (0,4);
 \node (C1) at (-\xzero,0) {};
 \node (C2) at (\xzero,0) {};
 \draw[|-|, very thick, blue] (-2,0) -- (2,0);
 \draw[->, blue] (1,-0.5) node[below right, blue]{$\chi^{-1}(1)$} -- (0.5,0);
 \draw[red, thick, postaction={decorate}, decoration={markings, mark = between 
positions 0.2 and 0.8 step 0.2 with{\arrow{>}}}] (-4,0) -- 
	(C1.center) .. controls (-0.7,0) and (-0.9,0.7) .. 
	($(C0.center)+(-0.4,0)$) -- ($(C0.center)+(0.4,0)$)
	.. controls (0.9,0.7) and (0.7,0) .. 
	(C2.center) -- (4,0);
 \draw[red,->] (-3,-0.5) node[below]{$\Gamma$}-- (-2.6,0);
\end{tikzpicture}%
 \end{center}
 \caption{As in \cref{fig:saddle_point}, the interval where $\chi = 1$ in blue. If 
$x$ is not too small, we deform a bit the integration path toward $ix$. 
}\label{fig:saddle_point1}
\end{figure}

\step{Step 3: upper-bound for $e^{\varphi_x(\xi)}$.}
On this path $\Gamma$, for every $|x|>\eta$ and $0<h\leq h_0$,
\begin{align*}
 |e^{\varphi_x(\Gamma(t))/h}| &= e^{\Re(\varphi_x(\Gamma(t)))} \\
  &= e^{(-t^2\!/2 +\eta_2^2\chi_2^2(t-\xi_0)/2 -|x|\eta_2\chi_2(t-\xi_0))/h}\\
  &\leq e^{-t^2\!/2h -|x|\eta_2\chi_2(t-\xi_0)/2h},
\end{align*}
where we used that $|x|>\eta_2$ and $0\leq \chi_2\leq 1$. Thus, for some $c>0$, we 
have for every $|x|>\eta_2$ and $t\in \set R$
\begin{equation}\label{eq:ighv_upper_phase}
|e^{\varphi_x(\Gamma(t))/h}| \leq e^{-c/h}.
\end{equation}

\step{Step 4: upper bound for the rest of the integrand.} We claim that there exists 
$C_N>0$ such that for every $f$ $C^\infty$ on $\Gamma$, for every $|x|>\eta$ and 
$\xi\in \Gamma$, 
\begin{equation}\label{eq:upper_Lx}
 |L_x^N f(\xi)| \leq \frac{C_N}{|hx|^N} \sum_{k\leq N}|\partial_\xi^k f(\xi)|.
\end{equation}

Indeed, according to Leibniz' rule, for any $k\geq 0$, $\xi\in \Gamma$ and $f$ 
$C^\infty$ on $\Gamma$,
\[
 \partial_\xi^k L_xf(\xi) = h^{-1} \partial_\xi^{k+1} \frac{f(\xi)}{\xi-\xi_0-ix} 
 =h^{-1}\sum_{\ell \leq k+1} C_{k,\ell} \frac{\partial_\xi^\ell 
f(\xi)}{(\xi-\xi_0-ix)^{k+2-\ell}}.
\]
So, reminding that $|x|>\eta$ and $|\Im(\xi)|<\frac\eta2$
\[
 |\partial_\xi^k L_x f(\xi)| \leq \frac{C_k}{|hx|} \sum_{\ell \leq k+1} 
|\partial_\xi^\ell 
f(\xi)|.
\]
By iterating this estimate, we get the upper bound~\eqref{eq:upper_Lx}. Now, choosing 
$f(\xi) = \chi(\xi- \xi_0)e^{-\rho_{\gamma,h}(\xi/h)}v_h(\xi)$, we get
\begin{align*}
 |L_x^N(\chi(\xi-\xi_0)e^{\rho_{\gamma,h}(\xi/h)} v_h(\xi))|
 &\leq \frac{C_N}{|hx|^N}\sum_{k\leq N} \left|\partial_\xi^k 
\left(\chi(\xi-\xi_0)e^{\rho_{\gamma,h}(\xi/h)}v_h(\xi)\right)\right|\notag\\
 &\leq \frac{C'_N}{|hx|^N}\sum_{k\leq N} \left|\partial_\xi^k 
\left(e^{\rho_{\gamma,h}(\xi/h)}v_h(\xi)\right)\right|.
\end{align*}
Moreover, for any $f$ holomorphic on $\Cc$, and for any $\xi\in \Gamma$ and $r>0$ 
such that $D(\xi,r)\subset \Cc$, the Cauchy integral formula implies that 
$|\partial_\xi^k f(\xi)|\leq C_r |f|_{L^\infty(D(\xi,r))}$. So,
\begin{align}
 |L_x^N(\chi(\xi-\xi_0)e^{\rho_{\gamma,h}(\xi/h)} v_h(\xi))|
 &\leq  \frac{C''_N}{|hx|^N} \left| 
e^{\rho_{\gamma,h}(\xi/h)}v_h(\xi)\right|_{L^\infty(D(\xi,r))}\notag\\
 &\leq  \frac{C''_N}{|hx|^N}  
e^{\epsilon_\gamma(h)/h}|v_h|_{L^\infty(\Cc)}.\label{eq:upper_symbol}
\end{align}

\step{Step 5: conclusion.}
Putting together the bounds~\eqref{eq:upper_symbol} and~\eqref{eq:ighv_upper_phase}, 
we get
\[
 |\Ighv|\leq \frac{C_N}{|x|^N} h^{-N}e^{-c/h + 
\epsilon_\gamma(h)/h}|v_h|_{L^\infty(\Cc)},
\]
which implies the claimed estimate.
\end{proof}

We also have the following upper-bound, which is weaker but valid for any $x$, even 
small. We will need it for some applications.
\begin{proposition}\label{th:ighv_upper_simple}
 Under the same hypotheses as \cref{th:ighv_upper}, there exist $c,C>0$ such that 
for every $x\in\set R$, $\gamma\in X$ and $v_h$ holomorphic bounded on $\Cc$,
\[
 |\Ighv| \leq Ce^{\epsilon_\gamma(h)/h}|v_h|_{L^\infty(\Cc)}.
\]
\end{proposition}
\begin{proof}
 It is only the integral triangle inequality.
\end{proof}

\section{Non-null-controllability of the generalized fractional heat 
equation}\label{sec:gen_frac}

\subsection{The generalized fractional heat equation on the whole real 
line}\label{sec:heat_R}

\begin{proof}[Proof of \cref{th:gen_frac} in the case $\Omega = \set R$]
\step{Well-posedness.}
Let us recall that $\inf_{\set R_+} \Re(\rho) < +\infty$. So, denoting $M$ this 
infinimum, we have for every $t\geq 0$, $\sup_{\xi\in \set R} |e^{-t\rho(|\xi|)}|\leq 
e^{tM}$. Thus, for every $t\geq 0$, we can define a linear bounded operator on 
$L^2(\set R)$ by
\[
 \forall f_0\in L^2(\set R),\ \forall x\in \set R,\ S(t)f_0(x) = \mathcal 
F^{-1}(e^{-t\rho(|\xi|)}\mathcal 
Ff_0)(x) = \frac1{2\pi}\int_{\set R} e^{ix\xi-t\rho(|\xi|)} \mathcal Ff_0(\xi) \diff 
\xi.
\]
We can see that $S(t)$ is a strongly continuous semigroup of bounded operators on 
$L^2(\set R)$. Moreover, the infinitesimal generator of $S(t)$ is 
$\rho(\ssqrt{-\Delta})$. Thus, the equation~\eqref{eq:gen_frac_control} is 
well-posed, in the sense of semigroups (see for instance 
\cite[Def.~2.36 and Th.~2.37]{coron_2007}).

\step{Construction of the counterexample to the observability inequality.}
We remind that the null-controllability of the generalized fractional heat 
equation on $\omega$ and in time $T$ is equivalent to the following 
\emph{observability inequality}~\cite[Th.~2.44]{coron_2007}: there exists $C>0$ such 
that for every $g_0\in L^2(\set R)$, the solution $g$ of
\begin{equation}
 \label{eq:gen_frac_adj}
 \partial_t g + \overline\rho(\sqrt{-\Delta})g = 0,\quad g(0,\cdot) = g_0
\end{equation}
satisfies
\begin{equation}
 \label{eq:gen_frac_obs}
 |g(T,\cdot)|_{L^2(\set R)} \leq C |g|_{L^2([0,T]\times \omega)}.
\end{equation}

In \cref{hyp} item 1, we choose $K$ and $\Cc$ to be those of the statement 
of~\cref{th:gen_frac}. Let $\xi_0, \delta$ and $\chi$ as in \cref{hyp} item 
2--3. Then, for $h>0$, we consider $g_{0,h}\in L^2(\set R)$ defined by
\[
 g_{0,h}(x) =  \sqrt{2\pi h}\s\chi(-ih\partial_x -\xi_0)e^{-x^2\!/2h +ix\xi_0/h} = 
h\int_{\set R} \chi(h\xi-\xi_0)e^{-(h\xi-\xi_0)^2\!/2h +ix\xi}\diff \xi.
\]

The solution $g_h$ of the generalized fractional heat 
equation~\eqref{eq:gen_frac_adj} with this initial condition is
\begin{equation}\label{eq:gen_frac_coherent}\begin{aligned}
 g_h(t,x) &= h\int_{\set R} \chi(h\xi-\xi_0) e^{-(h\xi-\xi_0)^2\!/2h +ix\xi 
-t\overline \rho(\xi)} \diff \xi\\
&= \int_{\set R} \chi(\xi-\xi_0) 
e^{-(\xi-\xi_0)^2\!/2h +ix\xi_0/h - t\overline \rho(\xi/h)}\diff \xi
\end{aligned}
\end{equation}
(let us remind that according to \cref{hyp}, $\chi(h\xi-\xi_0)$ is zero for 
$|h\xi-\xi_0|\geq\delta$, and in particular for $\xi<0$ if $\delta$ is chosen small 
enough, as in \cref{hyp}).

\step{Conclusion.}
In \cref{hyp} item 5, we choose $X = [0,T]$. For $t\in X$ and $h>0$, we choose 
$\rho_{t,h} \colon \xi\in\Cc \mapsto -t\overline \rho(\overline \xi)$. Since $\rho$ 
is holomorphic on $\Cc$ with $\rho(\xi) = \smallo(|\xi|)$, so is $\rho_{t,h}$ for 
every $t\in X$ and $h>0$. In other words \cref{hyp} item 5--6 are satisfied. 
Moreover, with the notations of Eq.~\eqref{eq:defI}, the function $g_h$ given 
by~\eqref{eq:gen_frac_coherent} can be writen as $g_h(t,x) = I_{t,h,1}(x)$.

So, according to \cref{th:ighv_asy} (or more precisely \cref{rk:ighv_asy}), there 
exists $C,c>0$ such that for $t\in[0,T]$, $x$ small enough (say $|x|<\eta'$) and $h>0$ 
small enough
\begin{equation}
 \label{eq:lower_coherent}
 |g_h(t,x)| \geq \frac12 e^{-x^2\!/2h -C\epsilon(h)/h}.
\end{equation}
Moreover, according to \cref{th:ighv_upper}, there exists $C,c>0$ 
such that for $t\geq 0$, $|x|>\eta$ and $h>0$ small enough,
\begin{equation}
 \label{eq:upper_coherent}
 g_h(t,x) \leq \frac C{|x|^2}e^{-c/h}.
\end{equation}

Thus, we have according to the lower bound~\eqref{eq:lower_coherent}
\begin{align}
 |g_h(T,\cdot)|_{L^2(\set R)} &\geq |g_h(T,\cdot)|_{L^2(|x|<\eta')} \geq 
c h^{1/4} e^{-C\epsilon(h)/h}\\
\intertext{and according to the upper bound~\eqref{eq:upper_coherent},}
 |g_h|_{L^2((0,T)\times \omega)}^2 &\leq T 
\int_{|x|>\eta}\frac{C}{x^4}e^{-c/h}\diff x \leq 
Ce^{-c/h}.
\end{align}
and since $\epsilon(h)\to 0$ as $h\to 0$, taking $h\to 0$ disproves the 
observability inequality.
\end{proof}

\begin{remark}
We implicitly looked at the generalized fractional heat equation with complex 
valued solutions. This means that we proved that 
there exists an initial condition $f_0$ of the generalized fractional heat equation 
that we cannot steer to $0$, but this initial condition might not be real valued. In 
the case where $\rho(\set R_+)\subset \set R_+$, we might be more interested 
in real valued solutions. But our results actually implies there exists a real valued 
initial condition that cannot be steered to $0$, for if both the real part $\Re(f_0)$ 
and the imaginary part $\Im(f_0)$ could be steered to $0$, then $f_0$ itself could be 
steered to $0$. A similar arguments stays valid for the Kolmogorov-type equation.
\end{remark}

\subsection{The generalized fractional heat equation on the 
torus}\label{sec:heat_bounded}
The case of the generalized fractional heat equation on the torus is a bit different 
because we are not dealing with integrals, but sums. Therefore, tools like the saddle 
point method do not seem to be of much use. Nonetheless, with a trick, we can deduce 
the theorem on the torus from the theorem on the whole real line.

\begin{proof}[Proof of \cref{th:gen_frac} in the case $\Omega = \set T$]
The basic idea is the trick of the proof of Poisson summation formula, namely the 
fact that the Fourier coefficients of a function of the form $\per{g_0}(x) = 
\sum_{k\in \set Z} g_0(x+2\pi k)$ are the values of the Fourier transform of $g_0$ 
evaluated at the integers (up to a multiplication by $\sqrt{2 \pi}$).

So, let $g_h\in C^\infty (\set R)$ be as in the previous section. Since the Fourier 
transform of $g_h(t,\cdot)$ is $C^\infty$ with compact support,\puncfootnote{We 
added the cutoff function $\chi$ just to localize the Fourier transform away from the
singularity of $|\xi|^\alpha$ at $\xi = 0$.} $g_h(t,x)$ decays faster than any 
polynomials as $|x|\to \infty$ and we can define $\per{g_h}(t,x) 
= \sum_{k\in \set Z}g_h(t,x+2\pi k)$. According to the trick described before, 
$c_n(\per{g_h}(t,\cdot)) = (2\pi)^{-1/2} \mathcal F(g_h)(t,\cdot)(n)$. But, by 
definition of $g_h$ as the solution of the rotated fraction heat equation, 
$\mathcal F(g_h)(t,\cdot)(\xi) = \mathcal F(g_h)(0,\cdot)(\xi) e^{-t\overline 
\rho(|\xi|)}$, so, using the trick again:
\begin{equation}
 c_n(\per{g_h}(t,\cdot)) = c_n(\per{g_h}(0,\cdot))e^{-t\overline{\rho}(|n|)}.
\end{equation}
So $\per{g_h}$ is a solution to the generalized fractional heat 
equation~\eqref{eq:gen_frac_adj} on the torus. Now we prove that 
the terms for $k\neq 0$ are negligible. Indeed, we have by definition of 
$\per{g_h}$
\begin{align}
  |\per{g_h}(T,\cdot)|_{L^2(\set T)} &= \Big\lvert\sum_{k\in\set Z} g_h(T,\cdot + 
2\pi k)\Big\rvert_{L^2(\set T)}\\
\intertext{and by singling out to term for $k=0$ and thanks to the triangle 
inequality}
  |\per{g_h}(T,\cdot)|_{L^2(\set T)} &\geq |g_h(T,\cdot)|_{L^2(-\pi,\pi)} -
   \sum_{k\neq 0}|g_h(T,\cdot) |_{L^2((2k-1)\pi,(2k+1)\pi)}\\
\intertext{and thanks to the pointwise estimates on $g_h$ 
(Eq.~\eqref{eq:upper_coherent} and~\eqref{eq:lower_coherent})}
  |\per{g_h}(T,\cdot)|_{L^2(\set T)} 
  &\geq ch^{1/4}e^{-C\epsilon(h)/h} - \sum_{k\neq 0} \frac{C}{k^2}e^{-c/h}
  \geq ch^{1/4}e^{-C\epsilon(h)/h} - Ce^{-c/h}.
\end{align}

In the same spirit, we have thanks to the triangle inequality, and identifying 
$\omega = \set T\setminus [-\epsilon,\epsilon]$ with
$(-\pi,\pi)\setminus [-\epsilon,\epsilon]\subset \set R$
\begin{align}
 |\per{g_h}|_{L^2([0,T]\times \omega)}
 &\leq\sum_{k\in \set Z}|g_h|_{L^2([0,T]\times(\omega+2\pi k))}\\
 \intertext{and according to the estimate~\eqref{eq:upper_coherent},}
 |\per{g_h}|_{L^2([0,T]\times \omega)}
 &=\mathcal O(e^{-c/h}).
\end{align}

Taking $h\to 0^+$ disproves the observability inequality~\eqref{eq:gen_frac_obs} and 
proves the Theorem.
\end{proof}

\subsection{Higher dimension}\label{sec:gen_frac_higher_dim}
\Cref{th:gen_frac} can be generalized to take into account the case $\Omega = \set 
R^d \times \set T^{d'}$. Indeed, the Propositions of \cref{sec:technical} are still 
valid in higher dimension. The computations are carried essentially the same way, 
only 
with the added technicalities of the higher dimension, for instance:
\begin{itemize}
 \item in \cref{th:saddle_point_local}, $U$ and $V$ are assumed to be open, bounded 
and convex subset of $\set R^d$ and $\set C^d$ respectively,
 \item in the proof of \cref{th:saddle_point_local}, the change of variables of step 
2 is given by a Morse Lemma with parameter, in the spirit 
of~\cite[Lemma~C.6.1]{hormander_2007},
 \item in \cref{hyp} $\chi$ is chosen to be $C_c^\infty(B(0,\delta))$ (open ball in 
$\set R^d$),
 \item $\rho(|\xi|)$ has to be replaced by $\rho\Big[\big(\sum_i 
\xi_i^2\big)^{1/2}\Big]$ (i.e.\ what happens to be holomorphic in $\xi\in\set C^d$ 
and that is equal to $\rho(|\xi|)$ if $\xi \in \set R^d$),
 \item in all the complex integrals that follows, we integrate against 
$\diff \xi_1 \wedge \dots\wedge \diff \xi_d$,
 \item also, the power of $2\pi h$ in front of the asymptotic expansion is $(2\pi 
h)^{d/2}$ 
(but it does not matter).
\end{itemize}
Then, the construction of the counterexample to the observability inequality if 
$\Omega = \set R^d$ is the same. For the case $\Omega = \set R^d \times \set T^{d'}$, 
we first consider a counter example in $\set R^{d+d'}$, and we periodize the last 
$d'$ components with the method of \cref{sec:heat_bounded}.

\section{Non-null-controllability of the Kolmogorov equation}
\subsection{Introduction}
Now, we look at the Kolmogorov equation~\eqref{eq:kolm_control}.
As for the fractional heat equation, the null-controllability of the Kolmogorov 
equation~\eqref{eq:kolm_control} is 
equivalent to the existence of $C>0$ such that for every solution $g$ 
of\,\footnote{Note that this is the adjoint of the Kolmogorov equation where we 
reversed the time.}
\begin{equation}\label{eq:kolm_adj}
  (\partial_t - v^2 \partial_x - \partial_v^2)g(t,x,v) = 0 \quad 
t\in(0,T),(x,v)\in \Omega
\end{equation}
with Dirichlet boundary conditions if $\Omega_v = (-1,1)$,
\begin{equation}\label{eq:obs_kolm}
 |g(T,\cdot)|_{L^2(\Omega)} \leq C |g|_{L^2((0,T)\times \omega)}.
\end{equation}

As hinted in the introduction, we look 
for counterexamples of the observability inequality among solutions of the adjoint 
of the Kolmogorov equation~\eqref{eq:kolm_adj} of the form $g(t,x,v) = \int_{\set 
R} a(\xi) e^{ix\xi}g_\xi(v)e^{-\lambda_\xi t}\diff\xi$, where $g_\xi(v)$ is the 
first eigenfunction of $-\partial_v^2 - i\xi v^2$ and $\lambda_\xi$ its associated 
eigenvalue. Let us remind that\footnote{We choose the branch of the square root with 
positive real part.} $\lambda_\xi = \sqrt{-i\xi}$ if $\Omega_v = \set R$, and is 
close to $\sqrt{-i\xi}$ if $\Omega_v = (-1,1)$.

We remark that apart from the $g_\xi(v)$ term, those solutions have the same form as 
solutions of the rotated fractional heat equation $(\partial_t+ 
\sqrt{-i}(-\Delta)^{1/4})g = 0$. So, the strategy is to prove the same estimates we 
proved for the rotated fractional heat equation, but with some uniformity in the 
parameter $v$. Since the computations are essentially the same, we only tell what we 
need to care about in comparison with the rotated fractional heat equation, but we 
do not give the full details of the computations again.

\subsection{The Kolmogorov equation with unbounded velocity}\label{sec:kolm_R}
\begin{proof}[Proof of \cref{th:th_kolm} with $\Omega_x= \Omega_v = \set 
R$]In the case $\Omega_v = \set R$, the first eigenfunction of $-\partial_v^2 -i\xi 
v^2$ is  $g_\xi(v) = e^{ - \ssqrt{-i\xi}\s v^2\!/2}$ with eigenvalue $\lambda_\xi = 
\sqrt{-i\xi}$. Without loss of generality, we may assume $\omega_x = \set 
R\setminus [-\eta,\eta]$ (let us remind that $\omega = \omega_x\times \set R$). 

Thus, we consider the function $g_h\colon \set R_+\times\set R^2\to \set C$ defined by
\begin{equation}
\begin{aligned}
 g_h(t,x,v) &=  
 h\int_{\set R} \chi(h\xi-\xi_0) e^{ix\xi -(h\xi-\xi_0)^2\!/2h 
-\ssqrt{-i\xi}\s (v^2\!/2 +t)} \diff \xi\\
 &= \int_{\set R} \chi(\xi-\xi_0) e^{ix\xi/h -(\xi-\xi_0)^2\!/2h 
-\ssqrt{-i\xi/h}\s (v^2\!/2 +t)} \diff \xi.
\end{aligned}
\end{equation}
Since every $(t,x,v)\mapsto e^{ix\xi-\ssqrt{-i\xi}\s(t+ v^2\!/2)}$ is a generalized 
solution to the Kolmogorov equation~\eqref{eq:kolm_adj}, the function $g_h$ is also 
solution to the Kolmogorov equation. We also remark that $g_h(0,\cdot,\cdot)\in 
L^2(\set R^2)$. Notice that these solutions are of the form $\tilde 
g_h(t+v^2\!/2,x)$, 
where $\tilde g_h$ is solution to the ``rotated fractional heat equation'' 
$(\partial_t+ \sqrt{-i}(-\Delta_x)^{1/4})\tilde g_h(t,x)=0$. Thus we have asymptotic 
expansion 
on $\tilde g_h$ similar to~\eqref{eq:lower_coherent} and~\eqref{eq:upper_coherent}.

For $t\geq 0$, $h>0$ and $\Re(\xi)>0$, we define $\rho_{t,h}(\xi) = -t\sqrt{-i\xi}$. 
Thus, with any $K>0$, and with $X = [0,T+1/2]$, \cref{hyp} holds. Thus, with the 
notations of \cref{hyp} and Eq.~\eqref{eq:defI}, we have for $h$ small enough
\[
 g_h(t,x,v) = I_{t+v^2\!/2,h,1}(x).
\]
Moreover, still with the notations of \cref{hyp}, we have for some $C>0$ and for 
every $0<h<1$, $C^{-1}h^{1/2} \leq \epsilon(h) \leq C h^{1/2}$.

Thus, according to \cref{th:ighv_asy} (or more precisely~\cref{rk:ighv_asy}), there 
exist $C,c>0$ such that for every $h>0$ small enough, $0\leq t \leq T$, $|x|$ small 
enough (say $|x|<\eta'$) and $|v|<1$
\begin{align}
 |g_h(t,x,v)| &\geq ce^{-x^2\!/2h -Ch^{-1/2}}.\label{eq:lower_kolm_R}\\
\intertext{Moreover, assuming $K$ large enough and choosing $X = \set R_+$, 
according to \cref{th:ighv_upper} (see also \cref{rk:ighv_upper}), there exist 
$C,c>0$ 
such that for every $h>0$ small enough, $|x|>\eta$, $t\geq 0$ and $v\in \set R$
}
 |g_h(t,x,v)| &\leq C|x|^{-2}e^{-c/h - c(t+v^2\!/2)h^{-1/2}},\label{eq:upper_kolm_R}
\end{align}
So, integrating these estimates, we have
\begin{equation}\label{eq:upper_kolm_R_bis}
 |g_h|_{L^2([0,T]\times \omega)} \leq Ce^{-c/h}
\end{equation}
and
\begin{equation}\label{eq:lower_kolm_R_bis}
 |g_h(T,\cdot,\cdot)|_{L^2(\Omega)} \geq|g_h(T,\cdot,\cdot)|_{L^2(|x|<\eta', 
|v|<1)}\geq ce^{-Ch^{-1/2}}.
\end{equation}

Taking again $h\to 0$ disproves the observability inequality and proves the Theorem.
\end{proof}

For the Kolmogorov equation with $\Omega_x = \set T$ and $\Omega_v = \set R$, we 
define $\per{g_h}(t,x,v) = \sum_{k\in\set Z} g_h(t,x+2\pi k,v)$, and as 
in \cref{sec:heat_bounded}, all but the term for $k=0$ are $\bigO(e^{-c/h})$. We 
let the careful reader work out the details.

\subsection{The Kolmogorov equation with non-rectangular control 
domain}\label{sec:kolm_b_g}
\begin{proof}[Proof of \cref{th:th_kolm_general} in the case $\Omega_x = 
\Omega_v = \set R$]
\begin{figure}
 \begin{minipage}{0.5\textwidth}
\centering
\begin{tikzpicture}[baseline = (O), scale=2.5]
 \fill[green!40, even odd rule] (-1,0.6) rectangle (1,-0.45) (0.3,0.1) 
circle[radius=0.4];
 \node (O) at  (0,0){};
 \draw (0.3,0.1) circle[radius=0.4];
 \filldraw[fill=blue!50] (0.2,-0.27) rectangle (0.4,0.27);
 \draw[very thick, blue] (0.3,-0.3) -- (0.3,0.3);
 \draw[dashed, blue!50] (0.3,-0.4)node[below, blue]{$x_0$} -- 
(0.3,-0.3);
 \draw[densely dashed, blue!50] (0,-0.3) node[blue, left]{$-a$} -- (0.3,-0.3)
 (0,0.3) node[blue, left]{$a$} -- (0.3,0.3);
 \draw[->] (-1.1,0) -- (1.1,0) node[above]{$x$};
 \draw[->] (0,-0.5) -- (0,0.7) node[right]{$v$};
 \node at (-0.6,0.4){$\omega$};
\end{tikzpicture}%
 \end{minipage}%
 \begin{minipage}[c]{0.4\textwidth}
\caption{In green, the control domain $\omega$. If there is a 
vertical line, symmetrical with respect to $\{v=0\}$, that does not intersect 
$\bar \omega$ (in dark blue), for every $a'<a$, there exists a rectangle of the 
form $\{|x-x_0|<b, -a'<v<a'\}$ that does not intersect $\bar\omega$ (in 
lighter blue).}\label{fig:omega}
\end{minipage}
\end{figure}
If $0<a'<a$, there exists $b>0$ such that the 
rectangle  $R=\{|x-x_0|<b, |v|<a'\}$ does not intersect $\overline \omega$ (see 
\cref{fig:omega}). Since the equation is invariant by translation in the $x$ 
direction, we may assume without loss of generality that $x_0 = 0$. 

We will use the same functions 
$g_h$ as in the previous proof. But while we used only the terms of order 
$h^{-1}$ in the exponent of estimate of \cref{th:ighv_asy}, we will 
now use the next term. More precisely, according to \cref{th:ighv_asy} with $X = 
[0,T+a^2\!/2]$ and $\rho_{t,h}(\xi) = -t\ssqrt{-i\xi}$, we have uniformly 
in $|x|$ small enough, $0\leq t \leq T$ and $|v|<a$:
\begin{equation}\label{eq:asy_g_precise}
 g_h(t,x,v) = I_{t+v^2\!/2,h,1}(x) = \sqrt{2\pi h}\s 
e^{\phi(t,x,v)/h}\big(1+\bigO_h(\sqrt h)\big)
\end{equation}
with
\begin{equation}
 \phi(t,x,v) = ix\xi_0 -\frac{x^2}2 -\sqrt{-i\xi_0-x}\s\left(t+\frac{v^2}2\right) 
h^{1/2} + \bigO_h(h).
\end{equation}

The idea is that, when computing $\int_{\Omega} |g_h(T,x,v)|^2\diff x\diff v$, the 
dominant part of this integral is around $x=v=0$, and when computing 
$\int_{[0,T]\times \omega} |g_h(t,x,v)|^2\diff t\diff x\diff v$, the dominant part 
is around $t = 0, x = x_0 = 0$ and $v = a$ or $-a$. So, noting ${c_0 = 
\Re(\ssqrt{-i\xi_0}) >0}$ 
and ignoring the error terms for the moment, we have
\begin{equation}
\int_{\Omega} |g_h(T,x,v)|^2\diff x\diff v 
\approx 2\pi h\int_{\substack{|x|<\epsilon\\|v|<\epsilon}} 
e^{-x^2\!/h - c_0(2T+v^2)/\sqrt h} \diff x \diff v \approx 
Ch^{7/4}e^{-2Tc_0/\sqrt h} 
\end{equation}
and
\begin{multline}
\int_{[0,T]\times\omega} |g_h(t,x,v)|^2\diff t\diff x\diff v 
\approx 2\pi h
\int_{\smash[b]{\substack{|x|<\epsilon\\a<|v|<a+\epsilon\\t<\epsilon}}}
e^{-x^2\!/h - c_0(2t+v^2)/\sqrt h} \diff t \diff x \diff v \approx 
Ch^{5/2}e^{-c_0a^2\!/\sqrt h}.
\end{multline}
So, if $2T<a^2$, the observability inequality cannot hold. 
Now, let us rigorously prove this. Let $\epsilon>0$ and $T=a'^2\!/2-\epsilon$.  
We have $\Re(\sqrt{-i\xi_0-x}) = c_0 +\bigO_x(x)$. So, for $x$ small enough, 
say, $|x|<\delta_{\xi_0}$, we have 
\begin{equation}
c_0-\epsilon \leq\Re\left(\sqrt{-i\xi_0-x}\right) \leq c_0+\epsilon.
\end{equation}

So, we have locally uniformly in $|x|<\delta_{\xi_0}$, $t\geq 0$ and $v\in\set R$ :
\begin{equation}\label{eq:lower_phase}
 \Re(\phi(t,x,v)) \geq -\frac{x^2}2 -(c_0+\epsilon) 
\left(t+\frac{v^2}2\right) h^{1/2} -\bigO_h(h)
\end{equation}
and
\begin{equation}\label{eq:upper_phase}
 \Re(\phi(t,x,v)) \leq -\frac{x^2}2 -(c_0-\epsilon) 
\left(t+\frac{v^2}2\right) h^{1/2} + \bigO_h(h).
\end{equation}

Now, let us get a lower bound for the left-hand side of the observability 
inequality~\eqref{eq:obs_kolm}. We have:
\begin{align}
 |g_h(T,\cdot,\cdot)|^2_{L^2(\Omega)} 
 &\geq |g_h(T,\cdot,\cdot)|^2_{L^2(|x|<b, |v|<a')}
 \intertext{and thanks to the asymptotic of Eq.~\eqref{eq:asy_g_precise}:}%
 |g_h(T,\cdot,\cdot)|^2_{L^2(\Omega)} 
 &\geq 2\pi h \int_{\substack{|x|<b\\|v|<a'}}
 e^{2\Re(\phi(T,x,v))/h} \diff x\diff v\big(1+\bigO(\sqrt h)\big)\\
 \intertext{and with the lower bound above (Eq.~\eqref{eq:lower_phase}):}
 |g_h(T,\cdot,\cdot)|^2_{L^2(\Omega)} 
 &\geq 2\pi h\s e^{\bigO_h(1)} \int_{\substack{|x|<b\\|v|<a'}}
 e^{-x^2\!/h-(c_0+\epsilon)(2T+v^2)/\sqrt h} \diff x\diff v\big(1+\bigO(\sqrt 
h)\big).
\end{align}
The integral in $x$ is
\begin{equation}
 \int_{|x|<b} e^{-x^2\!/h}\diff x = \sqrt{\pi h}+\bigO(e^{-c/h})
\end{equation}
while the integral in $v$ is
\begin{equation}
 \int_{|v|<a'} e^{-(c_0+\epsilon)v^2\!/\sqrt h}\diff v = 
\sqrt{\frac{\pi}{c_0+\epsilon}}h^{1/4} + \bigO\big(e^{-c/\sqrt h}\big).
\end{equation}
So, we have
\begin{align}
 |g_h(T,\cdot,\cdot)|^2_{L^2(\set R^2)} 
 &\geq c\frac{2\pi^2}{\sqrt{c_0+\epsilon}}
 h^{7/4}e^{-(c_0+\epsilon)2T/\sqrt h}\big(1+\bigO(\sqrt h)\big)\\
 \intertext{and for $h$ small enough:}
 |g_h(T,\cdot,\cdot)|^2_{L^2(\Omega)} 
 &\geq c h^{7/4}e^{-(c_0+\epsilon)2T/\sqrt h}\label{eq:lower_kolm_gen}.
\end{align}

Now, let us bound the right-hand side of the observability 
inequality~\eqref{eq:obs_kolm}. Let us remind that $\omega$ is a subset of 
$\Omega\setminus\{|x|<b, |v|<a'\}$. Let $a''>a'$, that will be 
chosen large enough afterwards. We define 
\begin{align}
\omega_0 &= \{|x|<\delta_{\xi_0}, a'<|v|<a''\}\label{eq:omega0}\\
\omega_1 &=\{|x|\geq\delta_{\xi_0}\}\label{eq:omega1}\\
\omega_{2} &= \{|x|<\delta_{\xi_0}, |v|>a''\}\label{eq:omega2}.
\end{align}
With these definitions, if $\delta_{\xi_0}<b$, we have $\omega\subset 
\omega_0\cup\omega_1\cup\omega_2$. So,
\begin{equation}
 |g_h|_{L^2([0,T]\times \omega)}^2 \leq 
|g_h|_{L^2(|0,T]\times\omega_{0})}^2 + 
|g_h|_{L^2(|0,T]\times\omega_{1})}^2 + 
|g_h|_{L^2(|0,T]\times\omega_{2})}^2
\end{equation}

First, according to \cref{th:ighv_upper_simple}, we have for every $t\geq 0, v\in 
\set R$ and $x\in \set R$,
\begin{equation} 
 |g_h(t,x,v)| \leq Ce^{-c'(t+v^2\!/2)h^{-1/2}}
\end{equation}
So, integrating this estimate, we have
\begin{equation}
 |g_h|_{L^2([0,T]\times \omega_2)}^2 = \bigO\left( \int_{|v|\geq a''} 
e^{-c'v^2\!/\sqrt h}\diff v\right) = \bigO\left(e^{-c'a''^2\!/\sqrt h}\right).
\end{equation}
We choose $a''$ large enough so that $c'a''^2 > (c_0-\epsilon)a'^2$. That way, we 
have
\begin{equation}\label{eq:up1}
 |g_h|_{L^2([0,T]\times \omega_2)}^2 =  \bigO\left(e^{-(c_0-\epsilon)a'^2\!/\sqrt 
h}\right).
\end{equation}
We have already seen in \cref{sec:kolm_R} that
\begin{equation}\label{eq:up2}
 |g_h|_{L^2([0,T]\times \omega_1)}^2 = \bigO(e^{-c/h}).
\end{equation}
Finally, thanks to Eq.~\eqref{eq:asy_g_precise} with upper 
bound~\eqref{eq:upper_phase}, we have uniformly in $0\leq t\leq T$, 
$|x|<\delta_{\xi_0}$ and $a'<|v|< a''$
\begin{equation}
 |g_h(t,x,v)|^2 \leq  2\pi h\s e^{-x^2\!/h - (c_0-\epsilon)(2t+v^2)/\sqrt h 
+\bigO_h(1)}.
\end{equation}
So, we have
\begin{equation}\label{eq:up3}
 |g_h|_{L^2([0,T]\times \omega_0)}^2 
=\bigO\left(\int_{a'<|v|<a''}e^{-(c_0-\epsilon)v^2\!/\sqrt h}\diff v\right) = 
\bigO\left(e^{-(c_0-\epsilon)a'^2\!/\sqrt h}\right).
\end{equation}

So, putting the three upper bounds \eqref{eq:up1} \eqref{eq:up2} and \eqref{eq:up3} 
together, we have
\begin{equation}\label{eq:upper_kolm_gen}
 |g_h|_{L^2([0,T]\times \omega)}^2 = \mathcal O\left(e^{-(c_0-\epsilon)a'^2\!/\sqrt 
h}\right).
\end{equation}

Let us assume that $(c_0-\epsilon)a'^2 > 2T(c_0+\epsilon)$. Then, considering the 
previous upper bound~\eqref{eq:upper_kolm_gen} and the lower 
bound~\eqref{eq:lower_kolm_gen}, and taking $h\to 0$ disproves the observability 
inequality. So, the Kolmogorov equation is not null-controllable in time 
$T<\frac{c_0-\epsilon}{c_0+\epsilon} a'^2\!/2$. This is true for every $a'<a$ and 
$\epsilon>0$, so the Kolmogorov equation is not null-controllable in time 
$T<a^2\!/2$.
\end{proof}

The case $\Omega_x = \set T$, $\Omega_v = \set R$ is similar. We look at 
$\per{g_h}(t,x,v) = 
\sum_{k\in\set Z} g_h(t,x+2\pi k,v)$. In this sum, as in \cref{sec:heat_bounded}, 
only the term for $k=0$ matters, as the other are 
$\bigO(e^{-c/h})$.

\subsection{The Kolmogorov equation with bounded velocity}\label{sec:kolm_bounded}
To treat the Kolomogorov equation with $\Omega_v = (-1,1)$, we need some information 
on the first eigenfunction $g_\xi$ of $-\partial_v^2 - i\xi v^2$ with Dirichlet 
boundary 
conditions on $(-1,1)$, and with associated eigenvalue $\lambda_\xi = \sqrt{-i\xi} + 
\rho_{\xi}$. Moreover, as we will use 
Theorems~\ref{th:ighv_asy}--\ref{th:ighv_upper_simple}, we also need 
some analycity in $\xi$. We will denote $\tilde g_{\tilde \xi}$ the 
first\footnote{``First'' in 
the sense that it is the analytic continuation in $\tilde\xi$ of the first 
eigenfunction 
of $-\partial_v^2 + (\tilde \xi v)^2$ for $\tilde \xi \in \set R_+$, assuming it 
exists.} eigenfunction of $-\partial_v^2 + (\tilde \xi v)^2$, and $\tilde 
\lambda_{\tilde\xi} = \tilde\xi + \tilde\rho_{\tilde \xi}$ the associated eigenvalue, 
so that, with $\tilde\xi = \ssqrt{-i\xi}$, we have $g_\xi = \tilde g_{\tilde \xi}$ 
and 
$\rho_\xi = \tilde \rho_{\tilde\xi}$, when this is defined.

In an article on the Grushin equation \cite[Section 4]{koenig_2017} we proved 
that $\tilde \rho_{\tilde \xi}$ and $\tilde g_{\tilde\xi}$ exist if 
$\Re(\tilde\xi)>0$ 
and $|\tilde\xi|>r(\lvert\arg(\tilde\xi)\rvert)$ for some non-decreasing function 
$r:(0,\pi/2)\to \set R_+$. We also proved the next two theorems.
\begin{theorem}[Theorem 22 and Remark 23 of 
\cite{koenig_2017}]\label{th:asymptotic}
Let $0<\theta<\pi/2$. With $\tilde \rho_{\tilde \xi}$ defined above, we have 
\[\tilde\rho_{\tilde\xi} \sim \frac{4}{\sqrt \pi} {\tilde\xi}^{3/2}e^{-\tilde\xi}\]
in the limit $|\tilde\xi|\to \infty, \lvert\arg(\tilde\xi)\rvert < \theta$.
\end{theorem}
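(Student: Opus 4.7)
The plan is to reduce the asymptotic of $\tilde\rho_{\tilde\xi}$ to a WKB-style two-solution matching argument around the reference function $\psi_0(v) = e^{-\tilde\xi v^2\!/2}$, which satisfies the \emph{same} ordinary differential equation $-\psi_0'' + \tilde\xi^2 v^2 \psi_0 = \tilde\xi\,\psi_0$ as $\tilde g_{\tilde\xi}$ but for the eigenvalue $\tilde\xi$ rather than $\tilde\xi + \tilde\rho_{\tilde\xi}$. Multiplying the eigenvalue equation $-\tilde g_{\tilde\xi}'' + \tilde\xi^2 v^2 \tilde g_{\tilde\xi} = \tilde\lambda_{\tilde\xi}\tilde g_{\tilde\xi}$ by $\psi_0$, integrating over $(-1,1)$, integrating by parts twice, using the Dirichlet condition $\tilde g_{\tilde\xi}(\pm 1)=0$ together with the evenness of the first eigenfunction, one obtains the exact identity
\[
\tilde\rho_{\tilde\xi} \,=\, \frac{-2\,\tilde g_{\tilde\xi}'(1)\,\psi_0(1)}{\int_{-1}^{1}\tilde g_{\tilde\xi}(v)\,\psi_0(v)\diff v}.
\]
The factor $\psi_0(1) = e^{-\tilde\xi/2}$ is the source of the exponential smallness, and the denominator is $\sim A\sqrt{\pi/\tilde\xi}$ by a Gaussian moment calculation, provided one has established that in the interior of $(-1,1)$, $\tilde g_{\tilde\xi}$ equals a scalar multiple $A\psi_0$ up to exponentially small corrections.

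The remaining ingredient is $\tilde g_{\tilde\xi}'(1)$, which is computed by matching against the second, \emph{growing} solution of the ODE at eigenvalue $\tilde\xi$, namely
\[
\tilde\psi_0(v) = e^{-\tilde\xi v^2\!/2}\int_0^v e^{\tilde\xi s^2}\diff s,
\]
which has Wronskian $W(\psi_0,\tilde\psi_0) = 1$. Writing $\tilde g_{\tilde\xi} = A\psi_0 + B\tilde\psi_0$, the Dirichlet condition forces $B = -A\psi_0(1)/\tilde\psi_0(1)$, and the Wronskian identity yields $\tilde g_{\tilde\xi}'(1) = -A/\tilde\psi_0(1)$. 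A Watson/Laplace estimate on $\int_0^1 e^{\tilde\xi s^2}\diff s$ (whose dominant contribution is the endpoint $s=1$) gives $\tilde\psi_0(1)\sim e^{\tilde\xi/2}/(2\tilde\xi)$, uniformly in any sector $|\arg\tilde\xi|<\theta<\pi/2$, so that $\psi_0(1)/\tilde\psi_0(1) \sim 2\tilde\xi\, e^{-\tilde\xi}$. Substituting into the identity, the normalization constant $A$ cancels and the stated asymptotic
\[
\tilde\rho_{\tilde\xi} \,\sim\, 2\cdot 2\tilde\xi\, e^{-\tilde\xi}\cdot\sqrt{\tilde\xi/\pi} \,=\, \frac{4}{\sqrt\pi}\,\tilde\xi^{3/2} e^{-\tilde\xi}
\]
follows.

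The main obstacle is to justify that $\tilde g_{\tilde\xi}$ is indeed a scalar multiple of $\psi_0$ plus an exponentially small error in the interior of $(-1,1)$, uniformly for $\tilde\xi$ in the sector $|\arg\tilde\xi|<\theta$. For real positive $\tilde\xi$ this is a textbook instanton/Agmon estimate for the rescaled harmonic oscillator on $(-\sqrt{\tilde\xi},\sqrt{\tilde\xi})$. For complex $\tilde\xi$ one route is analytic perturbation theory: once existence of $\tilde\lambda_{\tilde\xi}$ as a simple isolated eigenvalue is known in the sector (which is where the hypothesis $|\tilde\xi|>r(|\arg\tilde\xi|)$ intervenes), both $\tilde\lambda_{\tilde\xi}$ and $\tilde g_{\tilde\xi}$ are holomorphic in $\tilde\xi$, and the real-axis estimate propagates. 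An alternative, more direct route is a contour deformation in $v$: when $|\arg\tilde\xi|>\pi/4$ the real part of the potential $\tilde\xi^2 v^2$ ceases to be confining on the real axis, and one replaces $(-1,1)$ by a complex path joining $-1$ to $1$ on which $\Re(\tilde\xi^2 v^2)\geq 0$, recovering the usual harmonic-oscillator Agmon bound. Once this a priori control is secured, the Laplace asymptotic of $\int_0^1 e^{\tilde\xi s^2}\diff s$ is routine (the saddle is the endpoint $s=1$, which lies on a contour of steepest descent as long as $\Re(\tilde\xi)>0$), and the computation above goes through uniformly in the sector.
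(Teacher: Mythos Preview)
The paper does not contain its own proof of this statement: Theorem~\ref{th:asymptotic} is quoted as Theorem~22 and Remark~23 of \cite{koenig_non_2017} and used as a black box, so there is no in-paper argument to compare against.

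On its own merits your outline is correct and yields the right constant. The Lagrange identity
\[
\tilde\rho_{\tilde\xi}\int_{-1}^{1}\tilde g_{\tilde\xi}\,\psi_0\diff v \;=\; -2\,\tilde g_{\tilde\xi}'(1)\,\psi_0(1)
\]
is exact, and the two Laplace-type asymptotics $\int_0^1 e^{\tilde\xi s^2}\diff s\sim e^{\tilde\xi}/(2\tilde\xi)$ and $\int_{-1}^1 e^{-\tilde\xi v^2}\diff v\sim\sqrt{\pi/\tilde\xi}$ are indeed uniform on $\{|\arg\tilde\xi|<\theta\}$. One point deserves more care: you write ``$\tilde g_{\tilde\xi}=A\psi_0+B\tilde\psi_0$'' as though this were an exact decomposition, but $\tilde g_{\tilde\xi}$ solves the ODE at eigenvalue $\tilde\xi+\tilde\rho_{\tilde\xi}$, not $\tilde\xi$. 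The clean way to phrase your step is variation of parameters from $v=0$: with $\tilde g_{\tilde\xi}(0)=A$, $\tilde g_{\tilde\xi}'(0)=0$ one has exactly $\tilde g_{\tilde\xi}=c_1(v)\psi_0+c_2(v)\tilde\psi_0$ with $c_1(0)=A$, $c_2(0)=0$ and $c_1',c_2'=O(\tilde\rho_{\tilde\xi})$; your $(A,B)$ are then the boundary values $(c_1(1),c_2(1))$, and the Wronskian computation you wrote is exact in those variables. This closes the loop once you have \emph{any} crude a~priori bound $\tilde\rho_{\tilde\xi}=o(1)$ (to bootstrap $c_1(1)\sim A$), which follows e.g.\ from the Agmon estimate you invoke, or directly from the shooting relation $\tilde g_{\tilde\xi}(1)=0$ at the first Picard iterate. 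With that clarification the argument is complete in the sector.
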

\begin{proposition}[Proposition 25 of \cite{koenig_2017}]\label{th:est_agmon}
Let $\tilde g_{\tilde\xi}$ be defined above and normalized by $\tilde 
g_{\tilde\xi}(0) = 1$ (instead of $|\tilde g_{\tilde\xi}|_{L^2} = 1$). Let 
$0<\theta<\pi/2$ and $\epsilon>0$. We have 
for all $v\in(-1,1)$ and $|\tilde\xi|>r(\theta)$, 
$\lvert\arg(\tilde\xi)\rvert<\theta$:
\[|e^{(1-\epsilon)\tilde\xi v^2\!/2}\tilde g_{\tilde\xi}(v)| \leq 
C_{\epsilon,\theta}.\]
\end{proposition}

\Cref{th:asymptotic} gives us all we need to know on the eigenvalue, 
while \cref{th:est_agmon} gives us an upper bound on the eigenfunction. We will 
also need the following lower bound, that we prove in \cref{sec:agmon_lower}.
\begin{proposition}\label{th:lower_agmon}
 Let $0<\theta<\pi/2$ and $\epsilon>0$. We normalize $\tilde g_{\tilde\xi}$ again by 
$\tilde 
g_{\tilde\xi}(0) = 1$ and define $\tilde u_{\tilde \xi}(v) = e^{ \tilde\xi 
v^2\!/2}\tilde g_{\tilde\xi}(v)$. Then 
$\tilde u_{\tilde\xi}(v)$ converges exponentially fast to $1$, as $|\tilde\xi|\to 
\infty$, 
$\lvert\arg(\tilde\xi)\rvert<\theta$, this convergence being uniform in 
$|v|<1-\epsilon$.
\end{proposition}

With this, we know all we need to adapt the proof of the non-null-controllability 
of the 
Kolmogorov equation with $\Omega_v = \set R$ to the case of $\Omega_v = (-1,1)$.
\begin{proof}[Proof of \cref{th:th_kolm} with $\Omega_v = (-1,1)$]
We start with the case $\Omega_x = \set R$.

\step{Step 1: construction of the counterexample to the observability inequality.}
 The counterexample we build to the observability inequality~\eqref{eq:obs_kolm} 
is basically the same as in the case $\Omega_v = \set R$, only with the added 
corrections to the eigenvalues and eigenfunctions. We define $g_h(t,x,v)$ for $t\geq 
0$, $x\in \set R$, $v\in(-1,1)$ and $h>0$ small enough by:
\begin{equation}\label{eq:def_g_kolm_b_wo_recentering}
 g_h(t,x,v) = \int_{\set R}\chi(\xi-\xi_0) 
 e^{-ix\xi/h - (\xi-\xi_0)^2\!/2h -\lambda_{\xi/h}t} g_{\xi/h}(v) \diff \xi,
\end{equation}
where $\xi_0>0$ and $\chi\in C_c^\infty(\set R)$ are chosen as follows.

First note that according to the discussion at the top of this subsection, 
$\lambda_\xi$ and $g_\xi$ are defined and holomorphic with respect to $\xi \in \set 
C$ such that $\lvert\arg(\xi)\rvert < 3\pi/8$ (for instance) and $|\xi|$ 
large enough. Let then $K>0$ be large enough so that for any $\xi\in 
\Cc\coloneqq\{\Re(\xi)>K,|\Im(\xi)|<K^{-1}\Re(\xi)\}$, $\lambda_\xi$ and $g_\xi$ are 
defined and holomorphic with respect to $\xi\in\Cc$. Finally, let $\xi_0>0$ and 
$\chi$ as in \cref{hyp} (see \cref{fig:chgt_path}). With these choices, $g_h$ is 
well-defined for $0<h\leq 1$, 
$t\geq 0$, $x\in \set R$ and $v\in (-1,1)$.

We remark that each function $(t,x,v)\mapsto e^{-ix\xi-\lambda_{\xi}t}g_\xi(v)$ is 
solution to the Kolmogorov equation~\eqref{eq:kolm_adj}. So $g_h$ is solution of the 
Kolmogorov equation.

\begin{figure}
 \begin{center}
  \begin{tikzpicture}[baseline = (O), scale=0.8]
 \node (O) at (0,0){};
 \fill[red, opacity = 0.7] (20:1) -- (20:3.7) arc(20:-20:3.7) -- (-20:1) -- cycle;
 \draw[red,thick] (-20:3.7) -- (-20:1) -- (20:1) -- (20:3.7);
 \draw [->] (-1,0) -- (4,0);
 \draw [->] (0,-3) -- (0,1);
 \node (C) at (2,-0.2) {};
 \draw[red!70!black, thick,->] (1.5,-1) node[below]{$\Cc$} -- (C);
 \draw[thick,->] (1.7,0) arc (0:20:1.7) node[pos = 0.5, right]{$\arctan(1/K)$};
 \draw[thick] (0.94,-0.2) -- (0.94,0.2) node[above]{$K$};
\end{tikzpicture}
  \begin{tikzpicture}[baseline = (O), scale=0.8]
 \node (O) at (0,0){};
 \draw[thick] (75:1.03) -- (75:0.3)
   arc[radius=0.3, start angle= 75, end angle = -75] -- (-75:3.5);
 \path[fill=gray!40] (75:1.03) -- (75:0.3)
   arc[radius=0.3, start angle= 75, end angle = -75] -- (-75:3.5)
   arc[radius=3.5, start angle=-75, end angle=16.6]-- cycle;
 \draw [->] (-1,0) -- (4,0);
 \draw [->] (0,-3) -- (0,1);
 
 \begin{scope}[rotate=-45, scale = 0.95]
 \node (C) at (1.5,-0.3){};
 \node(C1) at(1.015,0) {};
 \node(C2) at (2,0){};
 \fill[red, opacity = 0.7] (10:1) -- (10:3.7) arc(10:-10:3.7) -- (-10:1) -- cycle;
 \draw[red,thick] (-10:3.7) -- (-10:1) -- (10:1) -- (10:3.7);
 \node (C) at (2.5,0) {};
 \draw[red!70!black, thick,->] (2.2,1) node[above]{$\sqrt{-i\Cc}$} -- (C.center);
 \end{scope}
\end{tikzpicture}
 \end{center}
 \caption{Left figure: in red, shape of $\Cc$ for some $K$. Right figure: if $\tilde 
\xi$ is in the gray domain, the eigenvalue
$\tilde\rho_{\tilde \xi}$ and the eigenfunction $\tilde g_{\tilde \xi}$ of 
$-\partial_v^2 + (\tilde\xi v)^2$ are defined. Thus, the eigenvalue $\lambda_\xi$ and 
eigenfunction $g_\xi$ of $-\partial_v^2 + i\xi v^2$ are defined for $\xi\in\Cc$ if 
$\sqrt{-i\Cc}$ lies inside the gray domain. } 
\label{fig:chgt_path}
\end{figure}

\step{Step 2: estimates on $g_h$.} Note that \cref{th:asymptotic} 
and \cref{th:est_agmon,th:lower_agmon} (with the choice $\epsilon = 1/2$) translate 
respectively into the estimates:
\begin{align}
 |e^{-t\rho_{\xi}}-1| &\leq Ce^{-c\ssqrt{|\xi|}} &&\text{for } \xi\in\Cc\text{ and } 
0\leq t\leq T\label{eq:est_rho}\\
 |e^{\ssqrt{-i\xi}\s v^2\!/4}g_{\xi}(v)| &\leq C 
&&\text{for } \xi\in \Cc \text{ and } |v|<1\label{eq:upper_g}\\
 |e^{\ssqrt{-i\xi}\s v^2\!/2} g_{\xi}(v) - 1| &\leq 
Ce^{-c\ssqrt{|\xi|}}
   &&\text{for } \xi\in \Cc \text{ and } |v|<1/2.\label{eq:est_g}
\end{align}
for some $C,c>0$.

\step{Step 2a: lower bound on $g_h$.}
We want to write $g_h(t,x,v)$ in the form of Eq.~\eqref{eq:defI}. Let $X = 
[0,T+1/2]$, and for $t\in X$, $0<h\leq 1$ and $\xi\in \Cc$, let $\rho_{t,h}(\xi) = 
-t\ssqrt{-i\xi}$. Finally, for $0<h\leq1$, $t\geq 0$, $v\in(-1,1)$ and $\xi\in\Cc$, 
let
\begin{equation}\label{eq:def_delta}
 \delta_{h,t,v}(\xi) \coloneqq e^{\ssqrt{-i\xi/h}\s v^2\!/2}g_{\xi/h}(v) 
e^{-t\rho_{\xi/h}}.
\end{equation}
Then, according to the definition of $g_h$ 
(Eq.~\eqref{eq:def_g_kolm_b_wo_recentering}),
\begin{equation}\label{eq:def_g_with_delta}
 g_h(t,x,v) =
 \int_{\set R}\chi(\xi-\xi_0) 
 e^{-ix\xi/h - (\xi-\xi_0)^2\!/2h -\ssqrt{-i\xi/h}\s(t+v^2\!/2)}
 \delta_{h,t,v}(\xi) \diff \xi = I_{t+v^2\!/2,h,\delta}(x).
\end{equation}
Moreover, according to estimates Eq.~\eqref{eq:est_rho} and~\eqref{eq:est_g}, we 
have, for some $C,c>0$:
\begin{equation}\label{eq:asy_delta}
 |\delta_{h,t,v}(\xi)-1| \leq Ce^{-ch^{-1/2}} \quad \text{for } \xi\in \Cc,\ 
|v|<1/2 \text{ and } 0<h\leq 1.
\end{equation}

So, according to \cref{th:ighv_asy}, there exists $C,c>0$ such that for every 
$t\in[0,T]$, $|v|<1/2$, $x$ small enough and $h$ small enough,
\begin{equation}
  |g_h(t,x,v)| \geq ce^{-x^2\!/2h -Ch^{-1/2}}.\label{eq:lower_kolm_b}
\end{equation}

\step{Step 2b: upper bound on $g_h$.} The estimate~\eqref{eq:est_g} does not extend 
up to the boundary. Thus, we have to use the less precise upper 
bound~\eqref{eq:upper_g}. To this end, we define $\tilde \delta_{h,t,v}(\xi) = 
e^{-\ssqrt{-i\xi/h}\s v^2\!/2}\delta_{h,t,v}(\xi)$. Then, according to the definition 
of $g_h$ (Eq.~\eqref{eq:def_g_kolm_b_wo_recentering}) and $\delta$ 
(Eq.~\eqref{eq:def_delta}),
\begin{equation}\label{eq:def_g_with_delta2}
 g_h(t,x,v) =
 \int_{\set R}\chi(\xi-\xi_0) 
 e^{-ix\xi/h - (\xi-\xi_0)^2\!/2h -\ssqrt{-i\xi/h}\s(t+v^2\!/4)}
 \tilde\delta_{h,t,v}(\xi) \diff \xi = I_{t+v^2\!/4,h,\tilde\delta}(x).
\end{equation}
Moreover, according to estimates~\eqref{eq:est_rho} and~\eqref{eq:upper_g}, there 
exist $C,c>0$ such that
\begin{equation}\label{eq:upper_delta}
  |\tilde \delta_{h,t,v}(\xi)| \leq C \quad \text{for } \xi\in \Cc,\ 
|v|<1 \text{ and } 0<h\leq 1.
\end{equation}

So, according to \cref{th:ighv_upper}, there exist $C,c>0$ such that for every 
$t\in[0,T]$, $|v|<1$, $|x|>\eta$ (where we assume without loss of generality $\omega 
= \{|x|>\eta\}\times (-1,1)$) and $h$ small enough,
\begin{equation}
  |g_h(t,x,v)| \leq \frac{C}{|x|^2}e^{-c/h}.\label{eq:upper_kolm_b}
\end{equation}

\step{Step 3: conclusion.}
From this point on, the proof is the same as in \cref{sec:kolm_R}: 
integrating the estimates~\eqref{eq:lower_kolm_b} and~\eqref{eq:upper_kolm_b} proves 
that $g_h$ is a counterexample to the observability inequality~\eqref{eq:obs_kolm} in 
the case $\Omega = \set R \times (-1,1)$ and $\omega = \omega_x\times (-1,1)$.

In the case $\Omega_x = \set T$, we again look at the periodic version of $g_h$, 
that is $\per{g_h}(t,x,v) = \sum_{k\in \set Z} g_h(t,x+2\pi k,v)$. As 
in \cref{sec:heat_R} (and~\ref{sec:kolm_R}), $\per{g_h}$ is a solution to the 
Kolmogorov equation, and it is a counterexample to the observability inequality.
\end{proof}

The proof of \cref{th:th_kolm_general} in the case $\Omega_v = (-1,1)$ is 
similar to the case $\Omega_v = \set R$ with the adaptation of the previous proof. 
Let us just sketch it.

We choose $a'<a$ and $b>0$ as in \cref{sec:kolm_b_g}. We consider the functions $g_h$ 
of the previous proof (Eq.~\eqref{eq:def_g_kolm_b_wo_recentering}). 

We compute the next order in the estimate~\eqref{eq:lower_kolm_b}. With 
\cref{th:ighv_asy}, we can prove that locally
uniformly in $v\in(-1,1)$, $|x|$ small enough and $t> 0$,
\begin{equation}\label{eq:lower_kolm_b_g}
 g_h(t,x,v) = \sqrt{2\pi h}\s e^{ix\xi_0/h - x^2\!/2h - 
\sqrt{\xi_0+ix}(t+v^2\!/2)/\sqrt h + \bigO_h(1)}\left(1+\bigO(\sqrt 
h)\right).
\end{equation}
Also, thanks to \cref{th:ighv_upper}, we prove that uniformly in $|x|>b$, $t>0$ 
and $v\in(-1,1)$, 
\begin{equation}\label{eq:upper_kolm_b_g}
 g_h(t,x,v) = \bigO(|x|^{-2}e^{-c/h}).
\end{equation}
We choose $a'<a''<1$ and we define $\omega_0$, $\omega_1$ and $\omega_2$ as in 
equations~\eqref{eq:omega0},~\eqref{eq:omega1} and~\eqref{eq:omega2} (the 
$\delta_{\xi_0}$ is the same in the cases $\Omega_v = (-1,1)$ and $\Omega_v = \set 
R$).

With the estimate~\eqref{eq:lower_kolm_b_g}, we can prove an estimate similar to the 
lower bound~\eqref{eq:lower_kolm_gen}. We can also prove an upper bound similar 
to~\eqref{eq:up3}. With the estimate~\eqref{eq:upper_kolm_b_g}, we can prove an 
estimate similar to~\eqref{eq:up2}. And with the help of 
\cref{th:est_agmon} to manage the terms for $|v|>a''$, we can prove an 
upper bound similar to~\eqref{eq:up1}. The rest of the proof is a copy-paste.

\appendix
\section{Other equations}\label{sec:other_eq}
In this appendix, we explain how we can use the method of \cref{sec:gen_frac} to 
prove the lack of null-controllability for some other equations.
\subsection{Fractional Schrödinger equations}
Let $0\leq \alpha<1$. If we consider $\rho(\xi) = i\xi^\alpha$ (defined e.g.\ for 
$\Re(\xi)\geq 0$), the hypotheses of \cref{th:gen_frac} hold. Thus, we have:
\begin{corollary}
 Let $0\leq \alpha<1$. Let $T>0$ and $\omega$ be a strict open subset of $\set R$. 
The fractional Schrödinger equation $(\partial_t + i(-\Delta)^{\alpha/2})f(t,x) = 
\mathds 1_\omega u(t,x)$, $t\geq 0$, $x\in\set R$ is not null-controllable on 
$\omega$ in time $T$.
\end{corollary}
Since $i(-\Delta)^{\alpha/2}$ generates a strongly-continuous \emph{group} of 
bounded operators on $L^2(\set T)$, it seems likely that this corollary can be 
extended to any riemannian manifold (and not only $\set R^d \times \set 
T^d$).\footnote{The fact that $i(-\Delta)^{\alpha/2}$ generates a strongly continuous 
group also implies that null-controllability is equivalent to 
\emph{exact}-controllability~\cite[Th.~2.41]{coron_2007}.} But this is outside 
the scope of this article. Also, we conjecture that the threshold $\alpha<1$ is 
optimal. Indeed, it seems that if $\omega$ satisfies the Geometric Control Condition 
of Bardos, Lebeau and Rauch~\cite{bardos_1992}, then the methods 
of~\cite{lebeau_1992} could be used to prove exact controllability in any time if 
$\alpha>1$ (see~\cite{filho_2020} for the case $\alpha\geq 2$) and with a minimal 
time if $\alpha = 1$ (better known as ``the half-wave equation''), but this is again 
outside the scope of this article. See also~\cite{biccari_2018} for a variant 
of the fractional Schrödinger equation.

\subsection{Another Kolmogorov-type equation}
Our method can also be used to prove that the Kolmogorov-type equation $(\partial_t 
-\partial_v^2 + v\partial_x)f(t,x,v) = \mathds 1_\omega u(t,x,v)$ is not 
null-controllable on vertical bands (notice that is is Eq.~\eqref{eq:kolm_control} 
where we replaced $v^2$ by $v$).

\begin{theorem}\label{th:kolm_v}
 Let $\Omega = (0,+\infty) \times \Omega_x$, 
and $\Omega_x = \set R$ or $\set T$. Let $\omega_x$ be a strict open subset of 
$\Omega_x$ and $\omega = \omega_x \times (0,+\infty)$, and let $T>0$. Then the 
equation
\begin{equation}\label{eq:kolm_control_v}
 \begin{aligned}
  (\partial_t + v \partial_x - \partial_v^2)f(t,x,v) &= \mathds 1_\omega 
u(t,x,v) &&\quad t\in [0,T], (x,v) \in \Omega\\
  f(t,x,0) &= 0 && \quad t\in[0,T], x\in \Omega_x\\
  f(0,x,v) &= f_0(x,v) &&\quad (x,v) \in \Omega
 \end{aligned}
\end{equation}
is not null-controllable on $\omega$ in time $T$.
\end{theorem}

\begin{proof}[Sketch of the proof]
We consider $\Ai$ the standard Airy function (see for instance~\cite[Ch.~9]{olver_2019}). 
Let $-\mu_0$ the first zero of $\Ai$. We denote $\lambda_0 = e^{i\pi/3}\mu_0$. For 
$\xi>0$, let $u_\xi\colon \set R \to \set C$ defined by $u_\xi(v) = 
\Ai(\xi^{1/3}e^{-i\pi/6}v-\mu_0)$. Using the ODE satisfied by $\Ai$ 
(\cite[\S9.2(i)]{olver_2019}), we see that 
$(-\partial_v^2 -i\xi v)u_\xi = \xi^{2/3}\lambda_0 u_\xi$. Moreover, $u_\xi(0) = 
0$, and  according to the asymptotic expansion satisfied by $\Ai$ 
(\cite[\S9.7ii]{olver_2019}), $u_\xi$ decays exponentially at $\infty$, as well as its 
derivatives. So 
$u_\xi$ is an eigenfunction of $-\partial_v^2 -i\xi v$ on $(0,+\infty)$ with 
Dirichlet boundary condition at $v=0$.

Let $\xi_0>0$ and $\chi\in C_c^\infty(-\xi_0,\xi_0)$. For $h>0$  
we consider the function $g_h\colon \set R_+\times \set R \times (0,+\infty) \to \set 
C$ defined by
\begin{equation}
 \label{eq:def_g_kolm_v}
 \begin{aligned}
 g_h(t,x,v) &= 
 h\int_{\set R_+} \chi(h\xi-\xi_0)e^{-(h\xi-\xi_0)^2\!/2h + ix\xi 
-t\lambda_0\xi^{2/3}}u_\xi(v) \diff \xi\\
 &=\int_{\set R_+} \chi(\xi-\xi_0)e^{-(\xi-\xi_0)^2\!/2h + ix\xi/h 
-t\lambda_0\xi^{2/3}h^{-2/3}}u_{\xi/h}(v) \diff \xi.
 \end{aligned}
\end{equation}

Since $u_\xi$ is an eigenfunction of $-\partial_v^2 -i\xi v$, $g_h$ is solution to 
$(\partial_t -  v \partial_x - \partial_v^2)g_h = 0$. Moreover, using the asymptotic 
expansion 
of $\Ai$~\cite[\S9.7(ii)]{olver_2019}, we have uniformly in $v>1$, in the limit 
$|\xi|\to +\infty$, 
$\lvert\arg(\xi)\rvert <\pi/2$ (for instance)
\[
 u_\xi(v) =
\exp\left(-\frac23(e^{-i\pi/6}\xi^{1/3}v-\mu_0)^{3/2}\right)\tilde u_\xi(v)\quad\text{
with}\quad \tilde u_\xi(v) = 
C\xi^{-1/12}v^{-1/4}(1+\bigO(\xi^{-1/2})).
\]
Thus, we can rewrite 
Eq.~\eqref{eq:def_g_kolm_v} as
\begin{equation}
 g_h(t,x,v) = \int_{\set R_+}\chi(\xi-\xi_0) e^{-(\xi-\xi_0)^2\!/2h +ix\xi/h 
+\rho_{t,v,h}(\xi/h)} \tilde u_{\xi/h}(v)\diff \xi
\end{equation}
with
\[
\rho_{t,v,h}(\xi) = -t\lambda_0 \xi^{2/3} 
-\frac23(e^{-i\pi/6}\xi^{1/3}v-\mu_0)^{3/2}.
\]

Thus, $g_h(t,x,v)$ can be written in the form~\eqref{eq:defI}. Moreover, if we 
choose $K>0$, then we can choose $\xi_0>0$ and $\chi\in C_c^\infty(\set R)$ such that 
\cref{hyp} holds with $X =\{(t,v), 0\leq t\leq T, 1\leq v \leq 2\}$. Then, 
\cref{th:ighv_asy} can be used to prove the lower-bound
\[
 |g_h(t,x,v)| \geq ce^{-x^2\!/2h -C h^{-2/3}},\quad t\in[0,T], |x|\text{ small 
enough}, v\in [1,2].
\]

To get an upper-bound, we choose $K$ large enough in \cref{hyp} so that 
$|\tilde u_\xi(v)|\leq C$ for some $C>0$ and every $\xi\in\Cc$ and $v\in(0,+\infty)$. 
Then, with the choice $X = [0,T]\times (0,+\infty)$ in \cref{hyp}, the
\cref{th:ighv_upper} can be used to prove the upper-bound
\[
 |g_h(t,x,v)|\leq \frac C{|x|^2}e^{-c/h-cv^{3/2}}.
\]

As for the Kolmogorov equation~\eqref{eq:kolm_control}, these two estimates prove 
that the observability inequality associated with the control 
problem~\eqref{eq:kolm_control_v} does not hold if $\Omega_x= \set R$. For the case 
$\Omega_x = \set T$, we periodize the solutions as in \cref{sec:heat_bounded}.
\end{proof}

We refer to \cref{sec:kolm_bib} for references related to the 
equation~\eqref{eq:kolm_control}. It seems \cref{th:kolm_v} could be extended to the 
case $\Omega = \Omega_x \times (a,b)$, as it is only a perturbation  of the case 
$\Omega = \Omega_x \times (0,+\infty)$.

\subsection{Improved Boussinesq equation}

Finally, we mention another equation whose null-controllability can be treated with 
our method.
\begin{proposition}
 Let $\Omega = \set R$ or $\set T$. Let $\omega$ be a strict open subset of $\Omega$ 
and let $T>0$. The equation
\begin{equation}\label{eq:impr_boussinesq}
 (\partial_t^2 -\partial_x^2 - \partial_x^2\partial_t^2)f(t,x) = \mathds 1_\omega 
u(t,x), \quad t\in[0,T], x\in \Omega
\end{equation}
is not null-controllable on $\omega$ in time $T$.
\end{proposition}

This equation has been studied by Cerpa and Crépeau~\cite{cerpa_2018}, where it is 
called «improved Boussinesq equation». They prove that, when posed on $\Omega = 
(0,1)$, it is not null-controllable with \emph{boundary control} at $x = 1$. They 
also prove that if $\Omega = \set T$, it is is null-controllable with \emph{moving} 
internal control on $\omega+ct$\footnote{In other words, the right-hand side is 
$\mathds 1_\omega(x-ct) u(t,x)$ instead of $\mathds 1_\omega(x) u(t,x)$.} if the 
speed $c$ is large enough. But while their results suggest the improved Boussinesq 
equation is not null-controllable with (non-moving) internal control, they do not 
prove it. Here, we provide a proof of this fact.

\begin{proof}[Sketch of the proof]
 Let $\xi_0>0$ and $\chi\in C_c^\infty$ to be chosen later. For $\xi\in \set R$ we 
define $\lambda_\xi = \xi^2(1+\xi^2)^{-1}$. For $h>0$, we consider
 \[
  g_h(t,x) = h\int_{\set R}\chi(h\xi-\xi_0)e^{-(h\xi-\xi_0)^2\!/2h +ix\xi -it 
\sqrt{\lambda_\xi}}\diff \xi.
 \]
 Elementary computations prove that $g_h$ is solution of $(\partial_t^2 -\partial_x^2 
-\partial_x^2\partial_t^2)g_h(t,x) = 0$ (it is related to the fact that this equation 
can be rewritten as $(\partial_t^2 -(I-\partial_x^2)^{-1}\partial_x^2)g_h(t,x) = 0$, 
and to the spectral analysis of this operator, see~\cite{cerpa_2018}).

With the notation of Eq.~\eqref{eq:defI}, $g_h(t,x) = I_{t,h,1}(x)$ with $\rho$ 
independant of $(t,h)$ defined by $\rho(\xi) = it\sqrt{\lambda_\xi}$. We can choose 
$K>0$, $\xi_0>0$, and $\chi\in C_c^\infty$ such that \cref{hyp} holds with $X = [0,T]$.

Then, with \cref{th:ighv_asy} and \cref{th:ighv_upper}, we prove that $(g_h)_{h>0}$ 
is a counterexample to the observability inequality associated to the control 
problem~\eqref{eq:impr_boussinesq} in the case $\Omega = \set R$.
In the case $\Omega = \set T$, we periodize $g_h$ as in \cref{sec:heat_bounded}.
\end{proof}

\section{Precise estimation of the eigenfunctions}\label{sec:agmon_lower}
To prove \cref{th:lower_agmon}, we will need the following theorem, which 
is a special case\footnote{In the reference, the Theorem is stated with an open 
(bounded star-shaped) domain $U$ instead of a arbitrary (bounded star-shaped) subset 
$E$ of $\set C$, but we can set $U = E^{\delta}$, and apply the Theorem as stated 
in the reference to get $|H_\gamma(f)|_{L^\infty(E)}\leq 
C_\delta|f|_{L^\infty(E^{2\delta})}$.} of Theorem~18 in~\cite{koenig_2017}.

\begin{theorem}\label{th:est_default}
Let $\mathcal S$ be the space of holomorphic function on the domain $\Omega = 
\{\Re(z)>1\}$ with sub-exponential growth at infinity, i.e.\ $\gamma\in\mathcal S$ 
if and only if for all $\epsilon>0$, $p_\epsilon(\gamma) = 
\sup_{\Re(z)>1}|\gamma(z)e^{-\epsilon|z|}|<+\infty$. We endow $\mathcal S$ with the 
seminorms family $(p_\epsilon)_{\epsilon>0}$. 

Let $\gamma$ in $\mathcal S$ and let $H_\gamma$ be the operator on polynomials with 
a double root at zero, defined by: 
\[H_\gamma\bigg(\sum_{n>1} a_n z^n\bigg) = \sum_{n>1} \gamma(n) a_n z^n.\]
Let $E$ be an bounded subset of $\set C$, star shaped with respect to $0$. Let 
$U$ be a neighborhood of $\bar{E}$. Then there exists 
$C>0$ such that for all polynomials $f$ with a double root at $0$:
\begin{equation}\label{eq:default}
|H_\gamma(f)|_{L^\infty(E)}\leq C|f|_{L^\infty(U)}.
\end{equation}

Moreover, the constant $C$ above can be chosen continuously in $\gamma\in \mathcal 
S$.
\end{theorem}

Note that according to the estimate of the previous 
\cref{th:est_default}, and assuming $U$ is star-shaped with respect to $0$, the 
operators $H_\gamma$ extend by density to every holomorphic 
function\footnote{According to Runge's theorem~\cite[Theorem 13.9]{rudin_1986}, the 
polynomials are dense in the space of holomorphic functions on $U$ with the topology 
of the convergence on every compact.} on $U$ (with a double zero at $0$).
So, we will apply this estimate~\eqref{eq:default} on entire functions (with a double 
zero at $0$).

\begin{proof}[Proof of \cref{th:lower_agmon}]
The proof is made by writing $\tilde u_{\tilde \xi}(v)$ as the power series $\tilde 
u_{\tilde \xi}(v) = \sum \tilde u_{\tilde \xi,2n} v^{2n}$, and showing that the 
coefficients $\tilde u_{\tilde \xi,n}$ of this power 
series are of the form $\tilde u_{\tilde \xi, 2n} = \tilde \rho_{\tilde 
\xi}\gamma_{\tilde \xi}(n) \tilde \xi^n/n!$ for $n\geq 1$, with $\tilde 
\rho_{\tilde \xi}$ 
defined at the beginning of \cref{sec:kolm_bounded}, so that with the 
notation of \cref{th:est_default}: 
\begin{equation}\label{eq:delta_tilde_u}
\tilde u_{\tilde  \xi}(v) = 1 + 
\tilde\rho_{\tilde \xi} H_{\gamma_{\tilde \xi}}(e^{\tilde \xi v^2}-1)(v)
\end{equation}
Then, \cref{th:est_default} will allow us to conclude.

Let us write $\tilde u_{\tilde \xi}(v) = \sum_{n = 0}^{+\infty} \tilde u_{\tilde 
\xi, n} v^n.$
Since $\tilde u_{\tilde\xi}$ satisfies the Cauchy problem $-\tilde u_{\tilde \xi}'' 
+ 2\tilde \xi v \tilde u_{\tilde \xi}'- \tilde \rho_{\tilde \xi}\tilde u_{\tilde 
\xi} = 0$ with initial conditions\footnote{Here we use the fact that $\tilde 
u_{\tilde \xi}$ is even when $\tilde \xi$ is real positive, which is well-known from 
Sturm-Liouville's theory.} $\tilde u_{\tilde \xi} (0) = 1$, $\tilde u_{\tilde 
\xi}'(0) = 0$, we have $\tilde u_{\tilde \xi, 0} = 1$, $\tilde u_{\tilde \xi, 2n+1} 
= 0$ and
\begin{equation}
 \tilde u_{\tilde \xi, n+2} = \frac{2n\tilde \xi-\tilde \rho_{\tilde 
\xi}}{(n+1)(n+2)} \tilde u_{\tilde \xi, n}
\end{equation}
so, by induction, for $n\geq1$
\begin{equation}
 \tilde u_{\tilde \xi, 2n} = -\frac{\tilde \rho_{\tilde \xi}}{2} 
 \frac{(4\tilde \xi)^{n-1} (n-1)!}{(2n)!} 
\prod_{k=1}^{n-1} \left(1-\frac{\tilde \rho_{\tilde \xi}}{4\tilde\xi k}\right).
\end{equation}

So, by defining
\begin{equation}
 \gamma_{\tilde \xi} (n) = -\frac{1}{8\tilde \xi n}\times\frac{4^n(n!)^2}{(2n)!}
\times\prod_{k=1}^{n-1} \left(1-\frac{\tilde \rho_{\tilde \xi}}{4\tilde\xi k}\right)
\end{equation}
we have $\tilde u_{\tilde \xi,2n} = \tilde \rho_{\tilde \xi}\gamma_{\tilde 
\xi}(n) \tilde \xi^n/n!$. So,
$ \tilde u_{\tilde \xi}(v) = 1 +\tilde\rho_{\tilde \xi}
 \sum_{n\geq 1} \gamma_{\tilde \xi}(n) \frac1{n!}{(v^2 \tilde \xi)^n}$.
Assuming that $\gamma_{\tilde \xi}$ is in $\mathcal S$, this is exactly the 
equation~\eqref{eq:delta_tilde_u} we were claiming.

Well, let us actually prove that $\gamma_{\tilde \xi}$ is in the space $\mathcal S$ 
defined 
in \cref{th:est_default}, i.e. that we can extend $n\mapsto \gamma_{\tilde 
\xi}(n)$ to a holomorphic function on $\Omega = \{\Re(z)>1\}$ with 
subexponential growth. This is 
obvious for the term $-1/(8\tilde\xi n)$. The term $4^n(n!)^2/(2n)!$ can be extended 
to $\Omega$ with Euler's Gamma function, and Stirling's approximation gives us the 
subexponential growth (actually an equivalent in $\sqrt{\pi z}$). The product term is a tiny 
bit more tricky to extend to non-integer values. We define it with the following 
formula, which is inspired by~\cite{muller_2005}, and where we have set $\alpha 
= -\tilde \rho_{\tilde \xi}/{4\tilde \xi}$:
\begin{equation}
 \delta_{\tilde \xi}(z) = 
\prod\limits_{k=1}^{+\infty}\dfrac{1+\frac\alpha{k}}{1+\frac\alpha { k+z-1 } }.
\end{equation}

This product converges if $|\alpha|<1/2$ and $\Re(z)>1$. And if $n$ is integer, 
$\delta_{\tilde \xi}(n)$ is a telescopic product, and we have $\delta_{\tilde 
\xi}(n) 
= \prod_{k=1}^{n-1} \left(1+\frac{\alpha}k\right)$. Moreover, $\delta_{\tilde \xi}$ 
is holomorphic on $\Omega$. We also claim that there exists $c,C>0$ 
such that if $|\alpha|<1/2$ and 
$\Re(z)>1$, $|\delta_{\tilde \xi}(z)| \leq C|z|^{c}$. 
The proof of this claim is 
just a few basic computations, and we postpone it after the end of the proof at hand.

Since $\alpha = \tilde \rho_{\tilde \xi}/4\tilde \xi$, 
according to \cref{th:asymptotic}, $|\alpha|<1/2$ as soon as 
$\lvert\arg(\tilde \xi)\rvert < \theta$ and $|\tilde \xi|$ is large enough, say 
$|\tilde \xi|>M$ (depending on $\theta$). Then, according to the claim, the term 
$\delta_{\tilde \xi}(z)$ has subexponential growth 
in $\Omega$, and since it is holomorphic, it is in $\mathcal S$. Moreover, this 
estimate also 
proves that $(\delta_{\tilde \xi})_{|\alpha|<1/2}$ is a 
bounded family of $\mathcal S$.

So $(\gamma_{\tilde \xi})$ is a bounded family of $\mathcal S$ for 
$\lvert\arg(\tilde 
\xi)\rvert<\theta$ and $|\tilde \xi|>M$. So, according to \cref{th:est_default} and 
the following remark, for any 
neighborhood $U$ of
$[-1+\epsilon,1-\epsilon]$ that is star-shaped with respect to $0$, there exists 
$C>0$ such that for all $|\tilde \xi|>M$ with $\lvert\arg(\tilde\xi)\rvert<\theta$ 
and for every $v\in 
(-1+\epsilon,1-\epsilon)$:
\begin{equation}
 \left\lvert H_{\gamma_{\tilde \xi}}(e^{\tilde \xi v^2}-1)(v) \right\rvert \leq
 C(1+|e^{\tilde \xi v^2}|_{L^\infty(U)})
\end{equation}
and if we choose $U$ to be small enough, we have $\lvert H_{\gamma_{\tilde 
\xi}}(e^{\tilde \xi v^2}-1)(v) \rvert
\leq C' \lvert e^{(1-\delta)\tilde \xi}\rvert.$
Finally, thanks to equation~\eqref{eq:delta_tilde_u} and \cref{th:asymptotic}, we 
have
\begin{equation}
 \lvert\tilde u_{\tilde \xi}(v) -1\rvert
 \leq C_\delta|\tilde\xi|^{3/2}|e^{-\delta\tilde \xi}|.
\end{equation}
\end{proof}
\begin{proof}[Proof of the claim that $|\delta_{\tilde \xi}(z)|\leq C|z|^c$]
We first write
\begin{equation}\label{eq:def_delta_symbol}
 \delta_{\tilde \xi}(z) = \exp\left(
\sum_{k=1}^{+\infty}\ln\left(1+\frac\alpha{k}\right) - 
\ln\left(1+\frac\alpha{k+z-1}\right)\right).
\end{equation}
Let us also remind that we assume $|\alpha|<1/2$ and $\Re(z)>1$, so that 
for $k\in \set N^\ast$ $|\alpha/k|<1/2$ and $|\alpha/(k+z-1)|<1/2$.
We denote $k_0 = \lfloor|z|\rfloor$, and we separate the sum into two parts:
\begin{equation*}
 S_{\leq k_0} = \sum_{k=1}^{k_0}\ln\left(1+\frac\alpha{k}\right) - 
\ln\left(1+\frac\alpha{k+z-1}\right)\qquad S_{>k_0} =\!\! 
\sum_{k=k_0+1}^{+\infty}\!\!\ln\left(1+\frac\alpha{k}\right) - 
\ln\left(1+\frac\alpha{k+z-1}\right)
\end{equation*}

About the part of a sum for $k\leq 
k_0$, we have thanks to the triangle inequality and the fact that for 
$|x|<1/2$, $\lvert\ln(1+x)\rvert\leq c|x|$:
\begin{equation}
\left|S_{\leq k_0}\right|
\leq 2c|\alpha|\sum_{k=1}^{k_0} \frac1k
\leq 2c|\alpha|(\ln(k_0) + C')
\leq 2c|\alpha|(\ln(|z|) +C')\label{eq:claim_small},
\end{equation}
where we used the relation between the harmonic sum and the logarithm and 
the fact that $k_0 = \lfloor|z|\rfloor$.
About the rest of the sum, we have by writing $\ln(1+b)-\ln(1+a) = \int_a^b 
\frac{\diff[0]x}{1+x}$,
\begin{equation}
\left|S_{>k_0}\right|
\leq\sum_{k=k_0+1}^{+\infty} 
\left|\int_{\alpha/k}^{\alpha/(k+z-1)}\frac{\diff[0]x}{1+x}\right| \leq 
\sum_{k=k_0+1}^{+\infty} 
2\left|\frac{\alpha}{k}-\frac{\alpha}{k+z-1}\right|
\leq2|\alpha (z-1)| \sum_{k=k_0+1}^{+\infty} \frac1{k^2},
\end{equation}
where we used the fact that for 
$x\in[\frac\alpha k,\frac\alpha{k+z-1}]$, $|\frac1{1+x}|\leq 2$. By comparing 
this sum with an integral,
\begin{equation}
\left|S_{>k_0}\right|
\leq 2|\alpha (z-1)|\int_{k_0}^{+\infty}\frac{\diff[0]x}{x^2}
\leq 2|\alpha| \frac{|z-1|}{k_0} \leq C''|\alpha|,\label{eq:claim_infinity}
\end{equation}
where we again used that $k_0 = \lfloor|z|\rfloor$. Summing the two 
inequalities~\eqref{eq:claim_small} and~\eqref{eq:claim_infinity}, and plugging this 
into equation~\eqref{eq:def_delta_symbol} proves the claim.
%
\end{proof}

\section*{Acknowledgments} The author would like to thank his Ph.D. advisor, 
Gilles Lebeau, for the counterexample to the observability inequality of the 
fractional heat equation and several other enlightening discussions. He also thanks 
Karine Beauchard for many interesting discussions.

\bibliographystyle{siamplain}
\bibliography{references}

\begin{thebibliography}{10}

\bibitem{bardos_1992}
{\sc C.~Bardos, G.~Lebeau, and J.~Rauch}, {\em Sharp {{Sufficient Conditions}}
  for the {{Observation}}, {{Control}}, and {{Stabilization}} of {{Waves}} from
  the {{Boundary}}}, SIAM J. Control Optim., 30 (1992), pp.~1024--1065,
  \url{https://doi.org/10.1137/0330055}.

\bibitem{beauchard_2014a}
{\sc K.~Beauchard}, {\em Null controllability of {{Kolmogorov}}-type
  equations}, Math. Control Signals Syst., 26 (2014), pp.~145--176,
  \url{https://doi.org/10.1007/s00498-013-0110-x}.

\bibitem{beauchard_2017a}
{\sc K.~Beauchard and P.~Cannarsa}, {\em Heat equation on the {{Heisenberg}}
  group: {{Observability}} and applications}, Journal of Differential
  Equations, 262 (2017), pp.~4475--4521,
  \url{https://doi.org/10.1016/j.jde.2016.12.021}.

\bibitem{beauchard_2014}
{\sc K.~Beauchard, P.~Cannarsa, and R.~Guglielmi}, {\em Null controllability of
  {{Grushin}}-type operators in dimension two}, J. Eur. Math. Soc., 16 (2014),
  pp.~67--101, \url{https://doi.org/10.4171/JEMS/428}.

\bibitem{beauchard_2018}
{\sc K.~Beauchard, J.~Dard{\'e}, and S.~Ervedoza}, {\em Minimal time issues for
  the observability of {{Grushin}}-type equations}, Jan. 2018,
  \url{https://arxiv.org/abs/hal-01677037}.

\bibitem{beauchard_2015a}
{\sc K.~Beauchard, B.~Helffer, R.~Henry, and L.~Robbiano}, {\em Degenerate
  parabolic operators of {{Kolmogorov}} type with a geometric control
  condition}, ESAIM: Control Optim. Calc. Var., 21 (2015), pp.~487--512,
  \url{https://doi.org/10.1051/cocv/2014035}.

\bibitem{beauchard_2017}
{\sc K.~Beauchard and K.~{Pravda-Starov}}, {\em Null-controllability of
  non-autonomous {{Ornstein}}\textendash{{Uhlenbeck}} equations}, Journal of
  Mathematical Analysis and Applications, 456 (2017), pp.~496--524,
  \url{https://doi.org/10.1016/j.jmaa.2017.07.014}.

\bibitem{beauchard_2018b}
{\sc K.~Beauchard and K.~{Pravda-Starov}}, {\em Null-controllability of
  hypoelliptic quadratic differential equations}, J. L'{\'E}cole Polytech. ---
  Math{\'e}matiques, 5 (2018), pp.~1--43, \url{https://doi.org/10.5802/jep.62}.

\bibitem{beauchard_2009}
{\sc K.~Beauchard and E.~Zuazua}, {\em Some controllability results for the
  {{2D Kolmogorov}} equation}, Annales de l'Institut Henri Poincare (C) Non
  Linear Analysis, 26 (2009), pp.~1793--1815,
  \url{https://doi.org/10.1016/j.anihpc.2008.12.005}.

\bibitem{biccari_2018}
{\sc U.~Biccari}, {\em Internal control for non-local {{Schr}}ödinger and wave
  equations involving the fractional {{Laplace}} operator}, Jan. 2018,
  \url{https://arxiv.org/abs/1411.7800}.
\newblock Preprint.

\bibitem{biccari_2019}
{\sc U.~Biccari and V.~{Hern{\'a}ndez-Santamar{\'i}a}}, {\em Controllability of
  a one-dimensional fractional heat equation: Theoretical and numerical
  aspects}, IMA J Math Control Info, 36 (2019), pp.~1199--1235,
  \url{https://doi.org/10.1093/imamci/dny025}.

\bibitem{cannarsa_2008}
{\sc P.~Cannarsa, P.~Martinez, and J.~Vancostenoble}, {\em Carleman
  {{Estimates}} for a {{Class}} of {{Degenerate Parabolic Operators}}}, SIAM J.
  Control Optim., 47 (2008), pp.~1--19,
  \url{https://doi.org/10.1137/04062062X}.

\bibitem{cannarsa_2016}
{\sc P.~Cannarsa, P.~Martinez, and J.~Vancostenoble}, {\em Global {{Carleman}}
  estimates for degenerate parabolic operators with applications}, Mem. Amer.
  Math. Soc., 239 (2016), pp.~ix+209, \url{https://doi.org/10.1090/memo/1133}.

\bibitem{cerpa_2018}
{\sc E.~Cerpa and E.~Cr{\'e}peau}, {\em On the controllability of the improved
  {{Boussinesq}} equation}, SIAM J. Control Optim., 56 (2018), pp.~3035--3049,
  \url{https://doi.org/10.1137/16M108923X}.

\bibitem{coron_2007}
{\sc J.-M. Coron}, {\em Control and {{Nonlinearity}}}, no.~143 in Mathematical
  {{Surveys}} and {{Monographs}}, {American Mathematical Society}, {Boston, MA,
  USA}, 2007.

\bibitem{darde_2018}
{\sc J.~Dard{\'e} and S.~Ervedoza}, {\em Backward uniqueness results for some
  parabolic equations in an infinite rod}, Jan. 2018,
  \url{https://arxiv.org/abs/hal-01677033}.
\newblock Preprint.

\bibitem{duyckaerts_2012}
{\sc T.~Duyckaerts and L.~Miller}, {\em Resolvent conditions for the control of
  parabolic equations}, Journal of Functional Analysis, 263 (2012),
  pp.~3641--3673, \url{https://doi.org/10.1016/j.jfa.2012.09.003}.

\bibitem{fattorini_1971}
{\sc H.~O. Fattorini and D.~L. Russell}, {\em Exact controllability theorems
  for linear parabolic equations in one space dimension}, Arch. Rational Mech.
  Anal., 43 (1971), pp.~272--292, \url{https://doi.org/10.1007/BF00250466}.

\bibitem{filho_2020}
{\sc R.~d. A.~C. Filho and A.~Pampu}, {\em The fractional
  {{Schr}}\textbackslash{}"odinger equation on compact manifolds: {{Global}}
  controllability results}, Jan. 2020, \url{https://arxiv.org/abs/2001.03740}.
\newblock Preprint.

\bibitem{fursikov_1996}
{\sc A.~V. Fursikov and O.~Y. Imanuvilov}, {\em Controllability of Evolution
  Equations}, no.~34 in Lecture {{Note Series}}, {Seoul National University},
  1996.

\bibitem{hormander_2007}
{\sc L.~H{\"o}rmander}, {\em The {{Analysis}} of {{Linear Partial Differential
  Operators III}}}, Classics in {{Mathematics}}, {Springer Berlin Heidelberg},
  {Berlin, Heidelberg}, 2007, \url{https://doi.org/10.1007/978-3-540-49938-1}.

\bibitem{koenig_2017}
{\sc A.~Koenig}, {\em Non-null-controllability of the {{Grushin}} operator in
  {{2D}}}, Comptes Rendus Mathematique, 355 (2017), pp.~1215--1235,
  \url{https://doi.org/10.1016/j.crma.2017.10.021}.

\bibitem{lerousseau_2012}
{\sc J.~Le~Rousseau and G.~Lebeau}, {\em On {{Carleman}} estimates for elliptic
  and parabolic operators. {{Applications}} to unique continuation and control
  of parabolic equations}, ESAIM: COCV, 18 (2012), pp.~712--747,
  \url{https://doi.org/10.1051/cocv/2011168}.

\bibitem{lebeau_1992}
{\sc G.~Lebeau}, {\em Contr{\^o}le de l'{\'e}quation de {{Schr{\"o}dinger}}}, J
  Math Pures Appl, 71 (1992), pp.~267--291.

\bibitem{lebeau_1995}
{\sc G.~Lebeau and L.~Robbiano}, {\em Contr{\^o}le exact de l'{\'e}quation de
  la chaleur}, Commun. Partial Differ. Equ., 20 (1995), pp.~335--356,
  \url{https://doi.org/10.1080/03605309508821097}.

\bibitem{micu_2006}
{\sc S.~Micu and E.~Zuazua}, {\em On the {{Controllability}} of a {{Fractional
  Order Parabolic Equation}}}, SIAM J. Control Optim., 44 (2006),
  pp.~1950--1972, \url{https://doi.org/10.1137/S036301290444263X}.

\bibitem{miller_2006}
{\sc L.~Miller}, {\em On the {{Controllability}} of {{Anomalous Diffusions
  Generated}} by the {{Fractional Laplacian}}}, Math. Control Signals Syst., 18
  (2006), pp.~260--271, \url{https://doi.org/10.1007/s00498-006-0003-3}.

\bibitem{miller_2010}
{\sc L.~Miller}, {\em A direct {{Lebeau}}-{{Robbiano}} strategy for the
  observability of heat-like semigroups}, Discrete Contin. Dyn. Syst. - Ser. B,
  14 (2010), pp.~1465--1485, \url{https://doi.org/10.3934/dcdsb.2010.14.1465}.

\bibitem{muller_2005}
{\sc M.~M{\"u}ller and D.~Schleicher}, {\em How to add a non-integer number of
  terms, and how to produce unusual infinite summations}, Journal of
  Computational and Applied Mathematics, 178 (2005), pp.~347--360,
  \url{https://doi.org/10.1016/j.cam.2004.08.009}.

\bibitem{olver_2019}
{\sc F.~W.~J. Olver, A.~B. Olde~Daalhuis, D.~W. Lozier, B.~I. Schneider, R.~F.
  Boisvert, C.~W. Clark, B.~R. Miller, and B.~V. Saunders}, {\em {{NIST Digital
  Library}} of {{Mathematical Functions}}}, Mar. 2019.
\newblock Release 1.0.22.

\bibitem{rudin_1986}
{\sc W.~Rudin}, {\em Real and {{Complex Analysis}}}, {McGraw Hill Education},
  3~ed., 1986.

\bibitem{sjostrand_1982}
{\sc J.~Sj{\"o}strand}, {\em Singularit{\'e}s analytiques microlocales}, in
  Ast{\'e}risque, 95, vol.~95 of Ast{\'e}risque, {Soc. Math. France, Paris},
  1982, pp.~1--166.

\bibitem{tenenbaum_2007}
{\sc G.~Tenenbaum and M.~Tucsnak}, {\em New blow-up rates for fast controls of
  {{Schr{\"o}dinger}} and heat equations}, Journal of Differential Equations,
  243 (2007), pp.~70--100, \url{https://doi.org/10.1016/j.jde.2007.06.019}.

\bibitem{tucsnak_2009}
{\sc M.~Tucsnak and G.~Weiss}, {\em Observation and {{Control}} for {{Operator
  Semigroups}}}, Birkh{\"a}user {{Advanced Texts}} {{Basler Lehrb{\"u}cher}},
  {Birkh{\"a}user Basel}, {Basel}, 2009,
  \url{https://doi.org/10.1007/978-3-7643-8994-9}.

\end{thebibliography}
\end{document}